\theoremstyle{plain}
\newtheorem{theorem}{Theorem}[section]
\newtheorem{proposition}[theorem]{Proposition}
\newtheorem{lemma}[theorem]{Lemma}
\theoremstyle{remark}
\newtheorem{remark}[theorem]{Remark}
\numberwithin{equation}{section}
\newcommand{\R}{\mathbb{R}}
\renewcommand{\Im}{\operatorname{Im}}
\renewcommand{\Re}{\operatorname{Re}}
\newcommand{\I}{\infty}
\def\({\left(}
\def\){\right)}
\def\<{\left\langle}
\def\>{\right\rangle}
\def\le{\leqslant}
\def\ge{\geqslant}
\def\l{\lambda}
\newcommand{\rre}{\mathbb{R}}
\newcommand{\pt}{\partial}
\begin{document}

\title[Quadratic 
nonlinear Klein-Gordon equation in 2D]
{Modified scattering for the quadratic\\ 
nonlinear Klein-Gordon equation\\
in two dimensions}
\author[S.Masaki and J.Segata]
{Satoshi Masaki and Jun-ichi Segata}

\address{Department systems innovation\\
Graduate school of Engineering Science\\
Osaka University\\
Toyonaka Osaka, 560-8531, Japan}
\email{masaki@sigmath.es.osaka-u.ac.jp}

\address{Mathematical Institute, Tohoku University\\
6-3, Aoba, Aramaki, Aoba-ku, Sendai 980-8578, Japan}
\email{segata@m.tohoku.ac.jp}

\subjclass[2000]{Primary 35L71; Secondary 35B40, 81Q05}

\keywords{scattering problem}



\begin{abstract}
In this paper, we consider the long time behavior of solution 
to the quadratic nonlinear Klein-Gordon equation 
(NLKG) in two 
space dimensions:
$(\Box+1)u=\lambda|u|u$, $t\in\rre$, $x\in\rre^{2}$,
where $\Box=\pt_{t}^{2}-\Delta$ is d'Alembertian. 
For a given asymptotic profile $u_{\mathrm{ap}}$, 
we construct a solution $u$ to (NLKG) 
which converges to $u_{\mathrm{ap}}$ as $t\to\infty$.
Here the asymptotic profile $u_{\mathrm{ap}}$ is given by the leading term of the solution 
to the linear Klein-Gordon equation with a logarithmic phase correction.
Construction of a suitable approximate 
solution is based on Fourier series expansion of the nonlinearity.
\end{abstract}

\maketitle

\section{Introduction}

We consider the final state problem 
for the quadratic nonlinear Klein-Gordon equation in two 
space dimensions:
\begin{equation}
\left\{
\begin{array}{l}
\displaystyle{(\Box+1)u
=\lambda|u|u
\qquad t\in\rre,\ x\in\rre^{2},}\\
\displaystyle{
u-u_{\mathrm{ap}}\to0\qquad\text{in}\ L^{2}\ \ \text{as}\ t\to+\infty,}
\end{array}
\right. \label{K}
\end{equation}
where $\Box=\pt_{t}^{2}-\Delta$ is d'Alembertian, 
$u:\rre\times\rre^{2}\to\rre$ is an unknown
function, $u_{\mathrm{ap}}:\rre\times\rre^{2}\to\rre$ is a given function, 
and $\lambda$ is a non-zero real constant. 

There are many known results on the 
scattering for the nonlinear Klein-Gordon equation
\begin{eqnarray}
(\Box+1)u=\lambda|u|^{p-1}u,\qquad t\in\rre,\ x\in\rre^{n},
\label{KL}
\end{eqnarray}
where $p>1$ and $\lambda\in\rre\backslash\{0\}$. 
Here, we focus on the results on scattering for small data.
For the scattering results for large data, 
see \cite{B1,B2,GV,Na} for instance. 
For the case $p_{0}(n)<p\le1+4/(n-2)$ with  
$p_{0}(n)=(n+2+\sqrt{n^{2}+12n+4})/(2n)$, 
small data scattering for (\ref{KL}) was 
studied by many authors, see \cite{St,P1,P2} for instance. 
As for the case $p\le p_{0}(n)$, 
Klainerman \cite{Kla} and Shatah \cite{Shatah} independently proved the global existence of 
a solution to the Klein-Gordon equation 
with the quadratic nonlinearity for $n=3$ 
by using the vector field approach and the normal form, 
respectively. By using the vector field approach, 
Georgiev and Lecente \cite{GL} obtained a point-wise decay estimates 
for solutions to the (\ref{KL}) for $p>1+2/n$ with $n=1,2,3$. 
Hayashi and Naumkin have shown in \cite{HN4} that the nonlinear interaction in (\ref{KL}) 
is a \emph{short range type} for $p>1+2/n$ with $n=1,2$, i.e.,
solutions to (\ref{KL}) scatter to the solution to the linear Klein-Gordon equation if $p>1+2/n$. 
On the other hand, Glassey \cite{G} and Matsumura \cite{Ma} proved that 
the nonlinear interaction in (\ref{KL}) is a \emph{long range 
type} for $1<p\le1+2/n$ and $n\ge2$, namely, solutions to (\ref{KL}) 
do not scatter to the solution to the linear Klein-Gordon equation if $1<p\le1+2/n$.
A similar result was obtained in 
the case where $p=3$ and $n=1$ by Georgiev and Yordanov \cite{GY}. 
With these results, we see that the exponent $p=1+2/n$ is the borderline between the 
short range and long range scattering theories. 

We briefly explain 
why the exponent $p=1+2/n$ appears as the borderline. 
Roughly speaking, the nonlinearity is short range if and only 
if the $L^2$ norm of the nonlinear term is integrable in 
$t\in[1,\infty)$. Since the point-wise decay of the solution 
to the linear Klein-Gordon equation is $O(t^{-n/2})$ 
as $t\to\infty$, the $L^2$ norm of the nonlinear term $|u|^{p-1}u$ 
has the rate $O(t^{-n(p-1)/2})$.
Then, we observe that the nonlinearity is short range if and only if
the integral $\int_1^{\infty}t^{-n(p-1)/2}dt$ is finite.
The condition is nothing but $p>1+2/n$.
The argument also suggest that  
solutions of the nonlinear equation (\ref{KL}) with $p\le1+2/n$ 
may have an asymptotic behavior different from a solution 
of the linear Klein-Gordon equation. 
Thus, the threshold is $p=1+2/n$.

For the Klein-Gordon equation with the cubic 
nonlinearity 
in one dimension,  Georgiev and Yordanov \cite{GY} studied the 
point-wise decay of a solution to the initial value problem. 
Delort \cite{D} obtained 
an asymptotic profile of a global solution to the equation 
corresponding to the small initial data with compact support 
(see also Lindblad and Soffer \cite{LS2} for an alternative 
proof). 
The compact support assumption in \cite{D} is later
removed by Hayashi and Naumkin in \cite{HN5}. 
We also note that global existence and 
the asymptotic behavior of a solution to the Klein-Gordon 
equation with the cubic quasi-linear nonlinearity is studied by 
Moriyama \cite{M}, Katayama \cite{Ka}, and Sunagawa \cite{S} in one space dimension.
Concerning the Klein-Gordon equation 
with the quadratic nonlinearity in 
two dimensions, Ozawa, Tsutaya, and Tsutsumi 
\cite{OTT} proved a global existence result 
and characterized the asymptotic behavior of a small solution 
to (\ref{KL}) with a smooth, quadratic, 
semi-linear nonlinearity, i.e., nonlinear term 
depends on $u,\pt_{t}u,\nabla u$. 
Delort, Fang, and Xue \cite{DFX} extended Ozawa-Tsutaya-Tsutsumi's result to the case where the nonlinear term is quasi-linear. 
See also Kawahara and Sunagawa \cite{KS} and Katayama, Ozawa and Sunagawa 
\cite{KOS} for related works.

In this paper we consider the scattering problem for (\ref{KL}) with 
the critical nonlinearity $|u|u$ in two space dimensions. Especially, 
we consider the final state problem: For a given 
asymptotic profile $u_{\mathrm{ap}}$, we construct a solution to (\ref{K}) 
which converges to the given asymptotic profile as $t\to\infty$. 
Notice that the critical nonlinearity $|u|u$ in two space dimensions 
was out of scope in the previous works due to the lack of smoothness 
of the nonlinear term.
 
Let us introduce the asymptotic profile $u_{\mathrm{ap}}$ which we work with.
To this end, we first recall that 
the leading term of a solution to the linear Klein-Gordon equation 
\begin{eqnarray*}
\left\{
\begin{array}{l}
\displaystyle{(\Box+1)v
=0
\qquad t\in\rre,\ x\in\rre^{2},}\\
\displaystyle{
v(0,x)=\phi_{0}(x),\quad \pt_{t}v(0,x)=\phi_{1}(x)\qquad x\in\rre^{2}}
\end{array}
\right. 
\end{eqnarray*}
is given by 
\begin{eqnarray*}
t^{-1}{\bf 1}_{\{|x|<t\}}(t,x)P_{1}(\mu)
\cos\left(\langle\mu\rangle^{-1}t\right)+
t^{-1}{\bf 1}_{\{|x|<t\}}(t,x)Q_{1}(\mu)
\sin\left(\langle\mu\rangle^{-1}t\right),
\end{eqnarray*}
where $\mu=x/\sqrt{t^{2}-|x|^{2}}$, ${\bf 1}_{\Omega}(t,x)$ 
is the characteristic function supported on
$\Omega\subset\rre^{1+2}$, 
and 
\begin{eqnarray}
P_{1}(\mu)
&=&-\langle\mu\rangle^{2}\Im\hat{\phi}_{0}(\mu)
-\langle\mu\rangle \Re\hat{\phi}_{1}(\mu),\label{p1}\\
Q_{1}(\mu)
&=&\langle\mu\rangle^{2}\Re\hat{\phi}_{0}(\mu)
-\langle\mu\rangle \Im\hat{\phi}_{1}(\mu),\label{q1}
\end{eqnarray}
see \cite{H} for instance.
For given final state $(\phi_{0},\phi_{1})$, we define the asymptotic 
profile $u_{\mathrm{ap}}$ by
\begin{eqnarray}
u_{\mathrm{ap}}(t,x)&:=&
t^{-1}{\bf 1}_{\{|x|<t\}}(t,x)P_{1}(\mu)
\cos\left(\langle\mu\rangle^{-1}t
+\Psi(\mu)\log t\right)\label{up}\\
& &+
t^{-1}{\bf 1}_{\{|x|<t\}}Q_{1}(\mu)
\sin\left(\langle\mu\rangle^{-1}t+\Psi(\mu)\log t\right),
\nonumber
\end{eqnarray}
where the phase correction term is given by
\begin{eqnarray}
\Psi(\mu)=-\frac{4}{3\pi}\langle\mu\rangle
\left|\hat{\phi}_{0}(\mu)+i\langle\mu\rangle^{-1}\hat{\phi}_{1}(\mu)\right|.
\label{phase}
\end{eqnarray}
The final state $(\phi_0,\phi_1)$ is taken from the function space $Y$ defined by 
\begin{eqnarray*}
Y&:=&\{(\phi_{0},\phi_{1})\in{{\mathcal S}}'(\rre)\times{{\mathcal S}}'(\rre);
\|(\phi_{0},\phi_{1})\|_{Y}<\infty\},\\
\|(\phi_{0},\phi_{1})\|_{Y}
&:=&\|\phi_{0}\|_{H_{x}^{2}}+\|x\phi_{0}\|_{H_{x}^{3}}+\|x^{2}\phi_{0}\|_{H_{x}^{4}}\\
& &+\|\phi_{1}\|_{H_{x}^{1}}+\|x\phi_{1}\|_{H_{x}^{2}}+\|x^{2}\phi_{1}\|_{H_{x}^{3}}.
\end{eqnarray*} 

The main result in this paper is as follows. 

\begin{theorem} \label{thm:main} 
Let $(\phi_{0},\phi_{1})\in Y$. 
For $1/2<d<1$,
there exist a sufficiently large number $T\ge e$ and a sufficiently small number 
$\varepsilon>0$ such that 
if $\|(\phi_{0},\phi_{1})\|_{Y}<\varepsilon$
then there exists a unique solution $u(t)$ for
the equation (\ref{K}) satisfying
\begin{gather}
u \in C([T,\infty);L^2_x), \notag \\
\sup_{t \ge T} t^d \|u-u_{\mathrm{ap}}\|_{L^{\infty}((t,\infty);L_x^{2})}
<\infty,
\label{eq:conv}
\end{gather}
where the asymptotic profile $u_{\mathrm{ap}}$ 
is defined by (\ref{up}).
\end{theorem}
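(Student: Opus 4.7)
\medskip
\noindent\textbf{Overall strategy.} I would solve the final-data problem by a contraction mapping centered on a \emph{refined} approximate solution $u_{\mathrm{app}}$, built on top of $u_{\mathrm{ap}}$, in a Banach space designed around the target decay rate $t^{-d}$. Writing $u = u_{\mathrm{app}}+w$, the remainder $w$ should satisfy the final-data Duhamel formula
\begin{equation*}
w(t) = -\int_t^{\infty} \frac{\sin\bigl((s-t)\langle D\rangle\bigr)}{\langle D\rangle}\Bigl[\lambda\bigl(|u_{\mathrm{app}}+w|(u_{\mathrm{app}}+w)-|u_{\mathrm{app}}|u_{\mathrm{app}}\bigr)-E(s)\Bigr]\,ds,
\end{equation*}
with $E := (\Box+1)u_{\mathrm{app}}-\lambda|u_{\mathrm{app}}|u_{\mathrm{app}}$, and I would look for $w$ in
\begin{equation*}
X_T := \bigl\{w\in C([T,\infty);L^2_x) : \sup_{t\ge T} t^d\|w(t)\|_{L^2_x} < \infty\bigr\}
\end{equation*}
intersected with a Sobolev-type companion norm giving $\|w\|_{L^\infty_x} = o(1)$, needed to absorb the purely quadratic part $w|w|$ of the difference of nonlinearities. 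The limit \eqref{eq:conv} then follows once one checks that $u_{\mathrm{app}}-u_{\mathrm{ap}}$ is itself $o(t^{-d})$ in $L^2_x$.

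\medskip
\noindent\textbf{Fourier expansion and construction of $u_{\mathrm{app}}$.} Inside the forward cone one rewrites $u_{\mathrm{ap}} = t^{-1}R(\mu)\cos\Theta(t,\mu)$, with $R(\mu) = \sqrt{P_1(\mu)^2+Q_1(\mu)^2} = \langle\mu\rangle\,|\langle\mu\rangle\hat\phi_0(\mu)+i\hat\phi_1(\mu)|$ and $\Theta(t,\mu) = \langle\mu\rangle^{-1}t+\Psi(\mu)\log t+\theta_0(\mu)$. Since $|\cos\Theta|\cos\Theta$ is $\pi$-antiperiodic and even, its Fourier series carries only odd harmonics:
\begin{equation*}
\lambda|u_{\mathrm{ap}}|u_{\mathrm{ap}} = \lambda t^{-2}R(\mu)^2\sum_{k\in 2\Z+1}c_k\,e^{ik\Theta(t,\mu)}\,\mathbf{1}_{\{|x|<t\}},\qquad c_{\pm 1}=\frac{4}{3\pi}.
\end{equation*}
The resonant harmonics $k=\pm 1$ are precisely those cancelled by the logarithmic phase $\Psi(\mu)\log t$ already present in $u_{\mathrm{ap}}$; the identity $c_{\pm 1} = 4/(3\pi)$ is exactly the constant appearing in \eqref{phase}. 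For each non-resonant odd $|k|\ge 3$, up to a finite cutoff $K$, I would add a correction $v_k = t^{-1}b_k(\mu)e^{ik\Theta}\mathbf{1}_{\{|x|<t\}}$ solving $(\Box+1)v_k = -\lambda t^{-2}c_k R^2 e^{ik\Theta}\mathbf{1}_{\{|x|<t\}}$ at leading order, which is possible because the symbol $1-k^2\langle\mu\rangle^{-2}$ stays bounded away from zero for $|k|\ge 3$. Setting $u_{\mathrm{app}} := u_{\mathrm{ap}}+\sum_{3\le|k|\le K,\,k\text{ odd}}v_k$, the residual $E$ then comprises only lower-order pieces: slow time derivatives of $R,\Psi,b_k$; the Fourier tail $|k|>K$; and boundary contributions from the sharp cut-off on the light cone. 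Transporting the $Y$-regularity of $(\phi_0,\phi_1)$ through \eqref{p1}--\eqref{phase} to weighted-Sobolev bounds on $R,\Psi,b_k$, one should obtain $\|E(t)\|_{L^2_x}\lesssim \varepsilon^2 t^{-1-d-\eta}$ for some $\eta>0$, provided $K$ is sufficiently large.

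\medskip
\noindent\textbf{Closing the contraction and main obstacle.} Using the $L^2$-boundedness of the Klein-Gordon propagator, the bound $\|u_{\mathrm{app}}\|_{L^\infty_x}\lesssim \varepsilon t^{-1}$, and the pointwise inequality $\bigl||a+b|(a+b)-|a|a\bigr|\lesssim (|a|+|b|)|b|$, the Duhamel map obeys
\begin{equation*}
t^d\|w(t)\|_{L^2_x}\lesssim \varepsilon\int_t^{\infty}s^{-1}\cdot t^d s^{-d}\,ds\cdot\|w\|_{X_T} + t^d\int_t^{\infty}\|E(s)\|_{L^2_x}\,ds \lesssim \frac{\varepsilon}{d}\|w\|_{X_T}+\varepsilon,
\end{equation*}
so for $\varepsilon$ small and $T$ large it is a contraction on a small ball, and the quadratic term $w|w|$ is handled by the companion Sobolev norm. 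The main obstacle is the non-smoothness of $r\mapsto|r|r$: classical normal-form or Taylor-expansion reductions require a $C^2$ nonlinearity, and must be replaced here by the Fourier-series substitute above. Because $|\cos\Theta|\cos\Theta$ is only $C^{1,1}$, its coefficients $c_k$ decay merely like $|k|^{-2}$, so balancing the truncation $K$ and the finite $Y$-regularity of the data against the required residual decay $t^{-1-d}$ is the technical heart of the argument, and it is this balance that produces the admissible range $1/2<d<1$.
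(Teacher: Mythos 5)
Your overall strategy — Fourier-expand $|\cos\Theta|\cos\Theta$, add a harmonic-by-harmonic correction to kill the non-resonant terms, and contract around the corrected profile — is exactly the paper's. But two parts of your outline leave genuine gaps. The first is the quadratic term $w|w|$ in the Duhamel map: your displayed estimate only treats the cross term $u_{\mathrm{app}}\cdot w$, and you defer $w|w|$ to an unspecified ``companion Sobolev norm giving $\|w\|_{L^\infty_x}=o(1)$.'' In two dimensions neither $H^{1/2}$ nor $H^1$ embeds into $L^\infty$, and the data class $Y$ does not propagate to uniform $L^\infty$ control of the remainder, so this does not close. The paper instead works in
\begin{equation*}
\|w\|_{X_T} = \sup_{t\ge T}t^d\bigl(\|w\|_{L^\infty((t,\infty);H^{1/2}_x)}+\|w\|_{L^4((t,\infty);L^4_x)}\bigr)
\end{equation*}
and handles the quadratic piece via Strichartz (Lemma \ref{lem:St}), through $\|w^2\|_{L^{4/3}_tL^{4/3}_x}\le \|w\|_{L^4_tL^4_x}\bigl(\int_t^\infty\|w\|_{L^2}^2\bigr)^{1/2}$. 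That bound is precisely what produces the factor $T^{-d+1/2}$, and hence the lower constraint $d>1/2$. Your claim that the range $1/2<d<1$ arises from ``balancing the truncation $K$ against the $Y$-regularity'' is therefore wrong on both ends: $d>1/2$ comes from the Strichartz contraction just described, and $d<1$ comes from the residual size $\|(\Box+1)A-N(A)\|_{L^2}=O(t^{-2}(\log t)^2)$ (Lemmas \ref{lem:pf1}, \ref{lem:vn}), which is $\le \eta t^{-1-d}$ for large $t$ exactly when $d<1$.

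The truncation at $K$ is also unnecessary, and your decay rate for the coefficients is too pessimistic. Because $|\cos\theta|\cos\theta$ has an absolutely continuous first derivative and a BV second derivative, its cosine coefficients are $O(n^{-3})$, not $O(n^{-2})$: the paper computes $c_n=-\frac{8}{\pi}\frac{\sin(n\pi/2)}{n(n^2-4)}$. Dividing by $(1-n^2)$ gives correction amplitudes $P_n,Q_n=O(n^{-5})$; each $\mu$-derivative costs a factor $n$ (from $\cos(n\beta),\sin(n\beta)$), and the resulting per-mode remainders satisfy $\|R_n\|_{L^2}\lesssim n^{-3}t^{-2}(\log t)^2$. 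So the \emph{full} infinite series $v_{\mathrm{ap}}=t^{-2}\mathbf{1}_{\{|x|<t\}}\sum_{n\ge2}(\cdots)$ already converges in $L^2$ and no cutoff is needed — the convergence of the infinite Fourier sum, not a balance against $K$, is the technical heart. One small correction: your $v_k$ should sit at the $t^{-2}$ level, not $t^{-1}$, to be dimensionally consistent with the equation $(\Box+1)v_k\approx(1-k^2)v_k$ set against the $t^{-2}$ amplitude of the Fourier-expanded nonlinearity; as written your ansatz cannot solve the stated leading-order equation.
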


\begin{remark}
Concerning the final state problem for (\ref{KL}) with the cubic nonlinearity 
in one space dimension, 
Hayashi and Naumkin \cite{HN2} constructed 
a modified wave operators for (\ref{KL}) for small final data.
Furthermore, Lindblad and Soffer \cite{LS1} showed existence of a 
modified wave operators for (\ref{KL}) for
large final data in the case where $\lambda<0$. 
\end{remark}

\begin{remark}
The same result holds true for equations with a general quadratic nonlinearity $F(u):\R \to \R$ satisfying
$F(\lambda u)=\l^2 F(u)$ for all $\lambda>0$ and $u\in\R$.
See Remark \ref{rem:general} below for the detail.
\end{remark}

The rest of the paper is organized as follows. 
Section 2 is devoted to the exhibition of an outline for 
the proof of Theorem \ref{thm:main}.
The proof of Theorem \ref{thm:main} is based 
on the contraction principle via the integral 
equation of Yang-Feldman type associated with 
(\ref{K}) around a suitable 
approximate solution. 
The crucial part of the proof
is construction of the suitable approximate solution.
We summarize how to do this in this section.
%
%
%
In Section 3, we solve an abstract final 
value problem around an approximate solution.
Then, in Section 4, we show 
the approximate solution given in Section 2
satisfies the assumptions of 
the final value problem in Section 3,
and we completes the proof of Theorem~\ref{thm:main}.


\section{Outline of the proof of Theorem \ref{thm:main}
} 

In this section, we give an outline of the proof of Theorem~\ref{thm:main}.
For $T>0$, we define the function spaces $X_{T}$ by 
\begin{eqnarray*}
X_T&:=&\{ w \in C([T,\infty);L^2_x); \ \|w\|_{X_T} < \infty \}, \\
\|w\|_{X_T}&: =& \sup_{t \ge T} t^d (\|w\|_{L_{t}^{\infty}((t,\infty);H^{1/2}_x)}
+\|w\|_{L^4((t,\infty);L_{x}^{4})}),
\end{eqnarray*}
where $1/2<d<1$.
Put $N(u) = \lambda|u|u$. 
Let $A$ be a function satisfying
\begin{gather}
\|A(t)\|_{L^{\infty}_x} \le \eta t^{-1},
\label{eq:a1}\\
\begin{split}
\|(\Box+1)A(t)-N(A)(t)\|_{L^2_x}
\le \eta t^{-1-d}.
\label{eq:a2}
\end{split}
\end{gather}
We will prove in Section 3 that, once we find such a function $A$,
there exists 
a unique solution $u$
to the equation (\ref{K}) satisfying $u-A \in X_T$.
To prove this assertion,  
we employ the Strichartz estimate (Lemma \ref{lem:St})
and the contraction argument. 
Hence, it suffices to construct a function $A$ satisfying the conditions
\eqref{eq:a1} and \eqref{eq:a2}
for a given final state $(\phi_{0},\phi_{1})\in Y$. 
It will turn out that $A=u_{\mathrm{ap}}$ does not work well, and so that we need further modification.

We now explain how to construct the function $A=A(t,x)$ satisfying 
the conditions \eqref{eq:a1} and \eqref{eq:a2}. 
The conclusion is that the choice $A:=u_{\mathrm{ap}}+v_{\mathrm{ap}}$ works, where 
$u_{\mathrm{ap}}$ is the \emph{first approximation} given by (\ref{up})  
and $v_{\mathrm{ap}}$ is
\emph{the second approximation} which is of the form 
\begin{eqnarray}
v_{\mathrm{ap}}&:=&
t^{-2}{\bf 1}_{\{|x|<t\}}\sum_{n=2}^{\infty}P_{n}(\mu)
\cos\left(n\langle\mu\rangle^{-1}t
+n\Psi(\mu)\log t\right)\label{vp}\\
& &+
t^{-2}{\bf 1}_{\{|x|<t\}}\sum_{n=2}^{\infty}Q_{n}(\mu)
\sin\left(n\langle\mu\rangle^{-1}t+n\Psi(\mu)\log t\right).
\nonumber
\end{eqnarray}
Here the phase function $\Psi$ is the same as (\ref{phase}), and 
choice of $P_{n}$ and $Q_{n}$ will be specified later. 
Remark that $v_{\mathrm{ap}}(t)= O(t^{-1})$ in $L_{x}^{2}$.
Toward the conclusion, we will observe
(i) why the second approximation $v_{\mathrm{ap}}$ is required,
and (ii) what is the appropriate choice of $P_n$ and $Q_n$, by a somewhat heuristic argument.
Hereafter, we consider the case $|x|<t$ only because $u_{\mathrm{ap}}$ and $v_{\mathrm{ap}}$ 
are identically zero in the region $|x|\ge t$.

We first focus 
on the nonlinear part $N(u_{\mathrm{ap}})=\lambda|u_{\mathrm{ap}}|u_{\mathrm{ap}}$. 
Since $N(u)=\lambda|u|u$ is not polynomial in $(u,\overline{u})$, 
it becomes difficult to pick up a \emph{resonant part} from $N(u_{\mathrm{ap}})$. 
Taking a hint from our previous paper \cite{MS2}, 
we use the Fourier series expansion of $N(u_{\mathrm{ap}})$ to 
decompose $N(u_{\mathrm{ap}})$ into the resonant part and the rest, the \emph{non-resonant part}.
%
This decomposition is done as follows.
We rewrite $u_{\mathrm{ap}}$ as 
\begin{eqnarray*}
u_{\mathrm{ap}}=\left\{\begin{aligned}
&t^{-1}{\bf 1}_{\{|x|<t\}}\sqrt{P_{1}(\mu)^{2}+Q_{1}(\mu)^{2}}
\cos(\alpha-\beta)\quad
&& \text{if}\ P_{1}(\mu)^{2}+Q_{1}(\mu)^{2}\neq0,\\
&0
&& \text{if}\ P_{1}(\mu)^{2}+Q_{1}(\mu)^{2}=0,
\end{aligned}
\right.
\end{eqnarray*}
where $\alpha=\langle\mu\rangle^{-1}t+\Psi(\mu)\log t$ 
and $\beta\in(0,2\pi]$ is given by 
\begin{eqnarray}
\cos\beta=\frac{P_{1}(\mu)}{\sqrt{P_{1}(\mu)^{2}+Q_{1}(\mu)^{2}}},
\quad
\sin\beta=\frac{Q_{1}(\mu)}{\sqrt{P_{1}(\mu)^{2}+Q_{1}(\mu)^{2}}}.
\label{y0}
\end{eqnarray}
Then, we have
\begin{eqnarray}
\label{y01}\\
N(u_{\mathrm{ap}})
&=&
\lambda t^{-2}(P_{1}(\mu)^{2}+Q_{1}(\mu)^{2})c_1\cos(\alpha-\beta)
\nonumber\\
& &+
\lambda t^{-2}(P_{1}(\mu)^{2}+Q_{1}(\mu)^{2})\sum_{n\ge2}c_{n}\cos(n\alpha-n\beta)
\nonumber\\
&=&
\frac{8}{3\pi}\lambda t^{-2}\sqrt{P_{1}(\mu)^{2}+Q_{1}(\mu)^{2}}
(P_{1}(\mu)\cos\alpha+Q_{1}(\mu)\sin\alpha)
\nonumber\\
& &+\lambda t^{-2}(P_{1}(\mu)^{2}+Q_{1}(\mu)^{2})
\sum_{n\ge2}
c_{n}
\cos(n\alpha-n\beta)
\nonumber\\
&=:&N_{\mathrm{r}}(u_{\mathrm{ap}})+N_{\mathrm{nr}}(u_{\mathrm{ap}}),
\nonumber
\end{eqnarray}
where $c_{n}$ are the Fourier coefficients for the function $|\cos\theta|\cos\theta$:
\begin{eqnarray*}
c_{n}=\frac{1}{\pi}\int_{0}^{2\pi}|\cos\theta|\cos\theta\cos n\theta d\theta
=
\left\{\begin{aligned}
&-\frac{8}{\pi}\frac{\sin(\frac{n}{2}\pi)}{n(n^{2}-4)}\quad
&& \text{if}\ n\ \text{is odd},\\
&0
&& \text{if}\ n\ \text{is even}.
\end{aligned}
\right.
\end{eqnarray*}
This kind of technique was also used in Sunagawa \cite{S} to pick up 
the resonant term from the cubic nonlinearity in one space dimension. 
In that case the Fourier series for $N(u_{\mathrm{ap}})$  consists of four terms. 
We would emphasize that, in our setting, the Fourier series consists of \emph{infinitely many terms}, so we need to 
take care of the convergence of the Fourier series,
which seems a new ingredient.
Fortunately, it will turn out that the nonlinearity $|u|u$ has enough smoothness to ensure the convergence of 
the Fourier series for $|u|u$. 
We mention similar but slightly different expansion of a nonlinearity into a infinite Fourier sires is used
by the first author and Miyazaki \cite{MMy} in the context of nonlinear Schr\"odinger equation.

Since both of the resonant and non-resonant parts are 
$O(t^{-1})$ in $L_{x}^{2}$, we need to 
cancel out those terms by the linear part, otherwise \eqref{eq:a2} fails.
Thanks to the phase correction $\Psi$, we have the desired cancellation of the resonant part.
Namely, we have
\begin{eqnarray}
(\Box+1)u_{\mathrm{ap}}
=N_{\mathrm{r}}(u_{\mathrm{ap}})+O(t^{-2}(\log t)^{2}),\qquad in\ L^{2}
\nonumber
\end{eqnarray}
as $t\to\infty$, 
see Lemma \ref{lem:pf1} for the detail. 
We then add a \emph{second approximation} 
$v_{\mathrm{ap}}$ of $u$, given in \eqref{vp},
in order to cancel the non-resonant term $N_{\mathrm{nr}}(u_{\mathrm{ap}})$ out.
This is the reason why we need the second approximation $v_{\mathrm{ap}}$.

To obtain the desired cancellation, we will choose suitable $P_{n}$ and $Q_{n}$.
More precisely, we choose them so that the leading term of 
$n$-th term of $(\Box+1)v_{\mathrm{ap}}$ and $n$-th term of the Fourier 
expansion of $N_{\mathrm{nr}}(u_{\mathrm{ap}})$ coincide. By a computation, we have
\begin{eqnarray*}
(\Box+1)v_{\mathrm{ap}}&=&
t^{-2}\sum_{n=2}^{\infty}(1-n^{2})P_{n}(\mu)
\cos\left(n\langle\mu\rangle^{-1}t
+n\Psi(\mu)\log t\right)\\
& &+
t^{-2}\sum_{n=2}^{\infty}(1-n^{2})Q_{n}(\mu)
\sin\left(n\langle\mu\rangle^{-1}t+n\Psi(\mu)\log t\right),\\
& &+O(t^{-2}(\log t)^{2}),\qquad in\ L^{2}
\end{eqnarray*}
as $t\to\infty$, see Lemma \ref{lem:vn} for the detail.
Hence, we obtain the specific choice
\begin{eqnarray}
\label{pn}\\
P_{n}(\mu)
=
\left\{\begin{aligned}
&\frac{8\sin(\frac{n}{2}\pi)}{\pi n(n^{2}-1)(n^{2}-4)}
\lambda(P_{1}(\mu)^{2}+Q_{1}(\mu)^{2})\cos(n\beta)
&& \text{if}\ n\ \text{is odd},\\
&0
&& \text{if}\ n\ \text{is even},
\end{aligned}
\right.\nonumber
\end{eqnarray}
\begin{eqnarray}
\label{qn}\\
Q_{n}(\mu)
=\left\{\begin{aligned}
&\frac{8\sin(\frac{n}{2}\pi)}{\pi n(n^{2}-1)(n^{2}-4)}
\lambda(P_{1}(\mu)^{2}+Q_{1}(\mu)^{2})\sin(n\beta)
&& \text{if}\ n\ \text{is odd},\\
&0
&& \text{if}\ n\ \text{is even}.
\end{aligned}
\right.\nonumber
\end{eqnarray}
With this choice, the leading term of 
the $n$-th term of $(\Box+1)v_{\mathrm{ap}}$ and the $n$-th term of the Fourier 
expansion for $N_{\mathrm{nr}}(u_{\mathrm{ap}})$ successfully cancel out each other. 
Further, it will turn out in Section 4 that the error term can be handled thanks to fast 
decay of $P_n$ and $Q_n$ in $n$.
Remark that the coefficients of $P_n$ and $Q_n$ are order $O(|n|^{-5})$ as $|n|\to\I$.
The decay rate of the Fourier coefficients reflects the smoothness of the nonlinearity $\l |u|u$. 
Thus, we see that $A=u_{\mathrm{ap}}+v_{\mathrm{ap}}$ 
satisfies the conditions \eqref{eq:a1} and \eqref{eq:a2}.

This kind of approximation was introduced in H\"{o}rmander \cite{H} 
for the Klein-Gordon equation with \emph{polynomial} nonlinearity in $(u,\overline{u})$. 
See also \cite{MTT,ST} for the nonlinear Schr\"{o}dinger equation 
with polynomial nonlinearity in $(u,\overline{u})$. 
 
\begin{remark}\label{rem:general}
Let us consider a generalization of Theorem \ref{thm:main}. 
Notice that any real-valued quadratic nonlinearity  
can be expressed as the linear combination of $|u|u$ and $u^2$. Indeed, if 
$F(u):\R \to\R$ satisfies $F(\lambda u)=\lambda^2 F(u)$ for any $\lambda>0$ and 
$u\in{{\mathbb R}}$, then we see 
\[
	F(u) = \frac{F(1)+F(-1)}{2} u^2 + \frac{F(1)-F(-1)}{2} |u|u.
\]
As for the even nonlinearity $u^2$, it is easy to pick up the resonant/non-resonant part from $u_{\mathrm{ap}}^{2}$ 
because the Fourier series of $u_{\mathrm{ap}}^{2}$ consists of the zeroth and the second terms only.
In particular, it contains no resonant part and so
existence of the even part does not change (the leading term of) the asymptotic profile. 
Thus, we can generalize Theorem \ref{thm:main} 
for general real-valued quadratic nonlinearity in two dimensions. 
Note that the final state problem for the Klein-Gordon equation 
with the nonlinearity $u^{2}$ in the two dimensions was studied by 
Hayashi and Naumkin \cite{HN3} by using the normal form method.  
It is an interesting problem to generalize Theorem \ref{thm:main} 
to equations with \emph{complex-valued} quadratic nonlinearities. 
As a related problem, we mention that Sunagawa \cite{Su} obtained the point-wise decay 
estimate of the complex valued solution to the initial value problem for 
the one dimensional cubic nonlinear Klein-Gordon equation. 
\end{remark}


\section{The final value problem} 

In this section, we solve a Cauchy problem at infinite initial time for the equation (\ref{K})
in an abstract framework.
Let $A(t,x)$ be a given asymptotic profile of a solution to (\ref{K}).
We show that if $A(t,x)$ is well-chosen then we obtain a solution which asymptotically behaves like $A(t,x)$.
The main respect is to make it clear how nicely an asymptotic profile should be chosen.

Let $N(u)=\lambda|u|u$. We introduce the \emph{error function} $F(t,x)$ by
\begin{eqnarray}
F:=(\Box+1)A -N(A).
\label{eq:defR}
\end{eqnarray}
For $T>0$, we introduce the following function space
\begin{gather*}
X_T =\{ w \in C([T,\infty);L^2_x); \ \|w\|_{X_T} < \infty \},
\end{gather*}
where
\begin{equation*}
\|w\|_{X_T} = \sup_{t \ge T} t^d (\|w\|_{L^{\infty}((t,\infty);H^{1/2}_x)}
+\|w\|_{L^4((t,\infty);L^{4}_x)})
\end{equation*}
with $1/2<d<1$.
For $\rho >0$ and $T>0$, we define
\begin{gather*}
\widetilde{X}_T(\rho)= \{ w \in C([T,\infty);L^2_x);
\ \|w\|_{X_T} \le \rho \}.
\end{gather*}
The function space $X_T$ is a Banach space with the norm $\|\cdot\|_{X_T}$
and $\widetilde{X}_T(\rho)$ is a complete metric space with
the $\|\cdot\|_{X_T}$-metric.

\begin{proposition} \label{prop:FVP}
Let $d$ be a constant such that $1/2<d<1$. 
Then there exist a sufficiently large $T>0$ and 
a sufficiently small $\eta>0$ such that if 
$A(t,x)$ satisfies
\begin{gather}
\|A(t)\|_{L^{\infty}_x} \le \eta t^{-1},
\label{eq:A1}\\
\begin{split}
\|F(t)\|_{L^2_x}\le \eta t^{-1-d},
\label{eq:A2}
\end{split}
\end{gather}
where $F(t,x)$ is given by (\ref{eq:defR}),
then there exists a unique solution $u$
for the equation (\ref{K}) satisfying
\begin{eqnarray*}
u \in C([T,\infty);L^2_x), 
\end{eqnarray*}
\begin{eqnarray}
\sup_{t \ge T} t^d (\|u-A\|_{L^{\infty}((t,\infty);H^{1/2}_x)}
+\|u-A\|_{L^4((t,\infty);L^{4}_x)}) <\infty.
\label{pr}
\end{eqnarray}
\end{proposition}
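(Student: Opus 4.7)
The plan is to reduce the statement to a Banach fixed-point argument for the perturbation $w := u - A$. A direct subtraction gives
$(\Box+1)w = \bigl(N(A+w) - N(A)\bigr) - F$,
and imposing $w \to 0$ in $L^2$ as $t \to \infty$ converts this into the Yang--Feldman integral equation
$w(t) = \Phi(w)(t) := -\int_t^\infty \frac{\sin((t-s)\langle D\rangle)}{\langle D\rangle}\bigl(N(A+w)(s) - N(A)(s) - F(s)\bigr)\,ds$,
with $\langle D\rangle = (1-\Delta)^{1/2}$. I would prove that $\Phi$ is a contraction on $\widetilde{X}_T(\rho)$ for $T$ large, $\eta$ small, and $\rho$ of order $\eta$; the proposition, including the continuity statement $u \in C([T,\infty); L^2_x)$, then follows from the standard properties of the Duhamel kernel together with $A + w \in C([T,\infty); L^2_x)$.

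The estimates driving the contraction combine the Strichartz estimate (Lemma \ref{lem:St}) with the pointwise quadratic bound $|N(A+w) - N(A)| \lesssim |A||w| + |w|^2$. Both $(\infty,2)$ at the $H^{1/2}$ level and $(4,4)$ at the $L^2$ level are admissible pairs for the two-dimensional Klein--Gordon flow (the half-derivative loss being encoded in the choice $H^{1/2}$), so applied on $(t,\infty)$ the inhomogeneous Strichartz inequality yields, schematically,
$\|\Phi(w)\|_{L^\infty((t,\infty);H^{1/2})} + \|\Phi(w)\|_{L^4((t,\infty);L^4)} \lesssim \|F\|_{L^1_s L^2_x} + \|A\,w\|_{L^1_s L^2_x} + \|w^2\|_{L^{q'}_s L^{r'}_x}$,
where the last norm comes from a dual Strichartz pair compatible with the $(4,4)$ component of the $X_T$-norm.

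To close the self-mapping estimate, I would insert the hypotheses as follows. The source $F$ satisfies $\|F\|_{L^1((t,\infty); L^2)} \le \eta \int_t^\infty s^{-1-d}\,ds \lesssim \eta t^{-d}/d$ by \eqref{eq:A2}. For the mixed linear-in-$w$ term, \eqref{eq:A1} together with the bound $\|w(s)\|_{L^2} \le \|w(s)\|_{H^{1/2}} \le \rho s^{-d}$ extracted from $w \in \widetilde{X}_T(\rho)$ gives $\|A w\|_{L^1((t,\infty); L^2)} \le \eta\rho \int_t^\infty s^{-1-d}\,ds \lesssim \eta\rho t^{-d}$. The purely quadratic piece is controlled by $\|w\|_{L^4_t L^4_x}^2 \le \rho^2 t^{-2d}$. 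Multiplying by $t^d$, taking the supremum in $t \ge T$, and choosing $\rho := 2C\eta$ followed by $T$ so large that $C\rho T^{-d} < 1/2$ closes the self-mapping estimate. A parallel computation applied to $\Phi(w_1) - \Phi(w_2)$, based on $|N(A+w_1) - N(A+w_2)| \lesssim (|A| + |w_1| + |w_2|)|w_1 - w_2|$, produces a Lipschitz constant of size $C(\eta + \rho)$, which is strictly less than $1$ for small $\eta$; uniqueness in $A + \widetilde{X}_T(\rho)$ is then standard.

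The main obstacle I anticipate is the bookkeeping of the Strichartz pair handling the purely quadratic term $w^2$: H\"{o}lder naturally places $w^2$ in $L^2_s L^2_x$, which corresponds to the $(2,2)$ dual pair, and one must verify that this (or a nearby pair, possibly after interpolating $L^4_s L^4_x$ with $L^\infty_s H^{1/2}_x$) is genuinely admissible for the 2D Klein--Gordon equation at the regularity $H^{1/2}$. A secondary subtlety is ensuring that the $(4,4)$-output of $\Phi$ inherits the $t^{-d}$ decay uniformly in the left endpoint $t$, which follows because the nonlinear source estimates above are already localised to $(t,\infty)$. Once these technical points are settled, the proof reduces to the standard Duhamel/Strichartz scheme outlined above.
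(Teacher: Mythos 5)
Your plan is essentially the paper's own proof: set $v=u-A$, pass to the Yang--Feldman integral equation driven by $\{N(v+A)-N(A)\}-F$, and run a contraction on $\widetilde{X}_T(\rho)$ using the Klein--Gordon Strichartz estimates, with the three source terms $F$, $Av$, and $v^2$ estimated separately. The one point you flag as unresolved -- which dual pair to use for the purely quadratic term -- is handled in the paper not by interpolation but by a split H\"older inequality: one estimates $\|v^2\|_{L^{4/3}((t,\infty);L^{4/3}_x)} \le \|v\|_{L^4((t,\infty);L^4_x)}\bigl(\int_t^{\infty}\|v(\tau)\|_{L^2_x}^2\,d\tau\bigr)^{1/2}$, so the source lands in the dual space $L^{4/3}_tL^{4/3}_x$ of the admissible pair $(4,4)$ from Lemma~\ref{lem:St}, rather than in the inadmissible $L^2_tL^2_x$. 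This produces the factor $\rho^2 t^{-d+1/2}$ (after multiplying by $t^d$), and since $d>1/2$ one closes the self-mapping and Lipschitz estimates by taking $T$ large; otherwise your argument matches the paper's step for step.
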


To prove Proposition \ref{prop:FVP}, we use the following 
inhomogeneous Strichartz estimates associated with the 
Klein-Gordon equation. Let 
\begin{eqnarray}
{{\mathcal G}}[g](t)
:=\int_t^{\infty} \sin((t-\tau)\sqrt{1-\Delta})(1-\Delta)^{-1/2}g(\tau)d\tau.
\label{g}
\end{eqnarray}

\begin{lemma}\label{lem:St} Let $2\le q<\infty$ and $1/p+1/q=1/2$. Then we have
\begin{eqnarray*}
\|{{\mathcal G}}[g]
\|_{L_{t}^{p}([T,\infty),L_{x}^{q})}
&\le&C\|(1-\Delta)^{1/2-2/q}g\|_{L_{t}^{p'}([T,\infty),L_{x}^{q'})},\\
\|{{\mathcal G}}[g]
\|_{L_{t}^{\infty}([T,\infty),L_{x}^{2})}
&\le&C\|(1-\Delta)^{-1/q}g\|_{L_{t}^{p'}([T,\infty),L_{x}^{q'})},\\
\|{{\mathcal G}}[g]\|_{L_{t}^{p}([T,\infty),L_{x}^{q})}
&\le&C\|(1-\Delta)^{-1/q}g\|_{L_{t}^{1}([T,\infty),L_{x}^{2})}.
\end{eqnarray*}
\end{lemma}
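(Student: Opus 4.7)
The plan is to derive these three inhomogeneous Strichartz bounds from the Klein-Gordon dispersive estimate in two space dimensions, combined with the Hardy--Littlewood--Sobolev inequality in time, the $TT^{\ast}$ principle, and the Christ--Kiselev lemma for the retarded truncation. Writing $\sin((t-\tau)\sqrt{1-\Delta})=(U_{+}(t-\tau)-U_{-}(t-\tau))/(2i)$ with $U_{\pm}(s):=e^{\pm is\sqrt{1-\Delta}}$, the operator $\mathcal{G}[g]$ splits into two retarded Duhamel integrals involving $U_{\pm}$ acting on $(1-\Delta)^{-1/2}g$. The crucial input is the dispersive estimate
\[
\|U_{\pm}(s) f\|_{L^q_x} \le C|s|^{-(1-2/q)} \|(1-\Delta)^{1-2/q}f\|_{L^{q'}_x}, \qquad 2\le q\le\infty,
\]
which I would establish by stationary phase on the oscillatory kernel with phase $\sqrt{1+|\xi|^{2}}$, yielding the endpoint $\|U_{\pm}(s)f\|_{L^{\infty}_x}\le C|s|^{-1}\|(1-\Delta) f\|_{L^1_x}$, and then interpolating with the $L^2$-isometry of $U_{\pm}(s)$.

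For estimate (i), Minkowski in $\tau$ together with the dispersive estimate applied to each summand gives
\[
\|\mathcal{G}[g](t)\|_{L^q_x} \le C\int_t^{\infty} |t-\tau|^{-(1-2/q)}\|(1-\Delta)^{1/2-2/q} g(\tau)\|_{L^{q'}_x}\,d\tau,
\]
where the factor $(1-\Delta)^{-1/2}$ built into $\mathcal{G}$ combines with $(1-\Delta)^{1-2/q}$ from the dispersive estimate to produce the exponent $1/2-2/q$. The admissibility $1/p+1/q=1/2$ is equivalent to $2/p=1-2/q$, which is precisely the critical scaling for the Hardy--Littlewood--Sobolev inequality with kernel $|t-\tau|^{-(1-2/q)}$; this yields the full-time $L^{p'}_t\to L^p_t$ bound, and the Christ--Kiselev lemma then transfers the bound to the retarded integral $\int_t^{\infty}$ in the non-endpoint regime $p>2$. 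The degenerate case $q=2$, $p=\infty$ follows directly from Minkowski and $L^2$ energy conservation.

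Estimates (ii) and (iii) are obtained after extracting the homogeneous Strichartz estimate $\|U_{\pm}(t) f\|_{L^p_t L^q_x} \le C\|(1-\Delta)^{1/2-1/q} f\|_{L^2_x}$, which comes from the $TT^{\ast}$ argument applied to $Tf:=U_{\pm}(t)(1-\Delta)^{-(1/2-1/q)}f$ using the same dispersive+HLS input. Estimate (iii) follows immediately: for each fixed $\tau$, the expression $\sin((t-\tau)\sqrt{1-\Delta})(1-\Delta)^{-1/2}g(\tau)$ solves the free Klein-Gordon equation with $L^2_x$ datum proportional to $(1-\Delta)^{-1/2}g(\tau)$, so the homogeneous Strichartz estimate combined with the Sobolev arithmetic $(1/2-1/q)-1/2=-1/q$ gives the required shift, and Minkowski in $\tau$ converts the pointwise-in-$\tau$ bound into the $L^1_t L^2_x$ norm. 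For estimate (ii), the dual of the homogeneous Strichartz reads $\|\int U_{\pm}(-\tau) h(\tau)\,d\tau\|_{L^2_x} \le C\|(1-\Delta)^{1/2-1/q} h\|_{L^{p'}_t L^{q'}_x}$; substituting $h=(1-\Delta)^{-1/2}g$ consolidates the Sobolev weights into $(1-\Delta)^{-1/q}$, pulling out the unitary free propagator from the retarded integral preserves the $L^2_x$ norm, and a final Christ--Kiselev step restores the $\int_t^{\infty}$ truncation.

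The main obstacle is establishing the dispersive estimate with the correct derivative loss $(1-\Delta)^{1-2/q}$: the Klein-Gordon phase $\sqrt{1+|\xi|^{2}}$ interpolates between Schrödinger-type behavior at low frequencies and wave-type behavior at high frequencies, so obtaining the sharp global-in-time decay requires a careful Littlewood--Paley--stationary-phase analysis, using that $(n+2)/4=1$ derivatives of Sobolev loss suffice to absorb the long-time $|t|^{-1}$ rate in $n=2$. Once this classical dispersive input is in place, the remaining arguments are routine combinations of Minkowski, Hardy--Littlewood--Sobolev, $TT^{\ast}$ duality, and the Christ--Kiselev lemma.
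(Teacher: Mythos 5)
Your derivation is correct and is essentially the same route the paper takes: the paper's entire proof is a two-line citation to the Marshall--Strauss--Wainger $L^{p}$--$L^{q}$ (dispersive) estimates for the Klein--Gordon propagator together with Yajima's duality argument, which is precisely the dispersive estimate $+$ Hardy--Littlewood--Sobolev $+$ $TT^{\ast}$ $+$ Christ--Kiselev package you spell out, with the correct Sobolev bookkeeping in each of the three inequalities. The only point deserving a footnote is that the sharp $q=\infty$ dispersive endpoint with loss $(1-\Delta)^{1}$ is usually stated in Besov norms, but since the lemma only requires $2\le q<\infty$ this is absorbed by the interpolation you already perform.
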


\begin{proof}[Proof of Lemma \ref{lem:St}]
The above inequalities follow from the $L^{p}$-$L^{q}$ estimate 
for the solution to the Klein-Gordon equation by \cite{MSW} and the duality 
argument by \cite{Y}. Since the proof is now standard, we omit the detail.
\end{proof}

\begin{proof}[Proof of Proposition \ref{prop:FVP}]
We put $v=u-A$.
Then the equation (\ref{K}) is equivalent to
\begin{equation}
(\Box+1)v = N(v+A) -N(A) -F,
\label{K'}
\end{equation}
where $F$ is defined by \eqref{eq:defR}.
The associate integral equation to the equation (\ref{K'}) is
\begin{eqnarray}
v=
{{\mathcal G}}[\{ N(v+A) -N(A)\} -F],
\label{eq:INT}
\end{eqnarray}
where ${{\mathcal G}}$ is given by (\ref{g}). It suffices to show the existence of a unique solution $v$
to the equation \eqref{eq:INT} in $X_T$ for sufficiently large
$T>0$ and sufficiently small $\eta>0$.
We prove this assertion by the contraction argument.
Define the nonlinear operator $\Phi$ by
\begin{eqnarray*}
\Phi v:=
{{\mathcal G}}[\{ N(v+A) -N(A)\} -F]
\end{eqnarray*}
for $v \in \widetilde{X}_T(\rho)$. We show that for any $\rho >0$,
$\Phi$ is a contraction map on $\widetilde{X}_T(\rho)$
if $T>0$ is sufficiently large and $\eta>0$ is sufficiently small.
Let $\rho >0$ be arbitrary,
and $T,\eta>0$ which will be determined below.
Let $v \in \widetilde{X}_T(\rho)$ and $t \ge T$.
By the assumptions and Lemma \ref{lem:St}, we see
\begin{equation*}
\begin{split}
\|(\Phi &v)(t)\|_{L^{\infty}((t,\infty);H^{1/2}_x)}
+\|\Phi v\|_{L^4((t,\infty);L^{4}_x)} \\
\le & C(\|v^{2}\|_{L^{4/3}((t,\infty);L^{4/3}_x)}
+\|(1-\Delta)^{-1/4}Av\|_{L^1((t,\infty);L^2_x)} \\
& +\|(1-\Delta)^{-1/4}F\|_{L^1((t,\infty);L^2_x)}) \\
\le & C \biggl\{ \|v\|_{L^4 ((t,\infty);L^{4}_x)}
\left( \int_t^{\infty} \|v(\tau)\|_{L^2_x}^2 \,d\tau \right)^{1/2} 
+\int_t^{\infty} \|A(\tau)\|_{L^{\infty}_x}
\|v(\tau)\|_{L^2_x}\,d\tau \\
& +\int_t^{\infty}\|F(\tau)\|_{L^2_x}\,d\tau \biggr\} \\
\le & C \biggl\{ \rho t^{-d}
\left( \int_t^{\infty} \rho^2\tau^{-2d} \,d\tau \right)^{1/2}
+\int_t^{\infty} \eta \tau^{-1} 
\rho \tau^{-d} \,d\tau +\int_t^{\infty} \eta \tau^{-1-d} \,d\tau \biggr\} \\
\le & C t^{-d} (\rho^2 t^{-d+1/2} +\rho \eta
+\eta ).
\end{split}
\end{equation*}
Therefore we obtain
\begin{equation}
\|\Phi v\|_{X_T} \le C_{1}(\rho^2 T^{-d+1/2} +\rho \eta
+\eta).
\label{eq:into}
\end{equation}
In the same way as above, for $v_1,v_2 \in \widetilde{X}_T(\rho)$,
we can show
\begin{equation}
\begin{split}
\|\Phi v_1 & -\Phi v_2\|_{X_T} \\
\le & C_{2}((\|v_1\|_{X_T} +\|v_2\|_{X_T}) T^{-d+1/2}
+\eta)\|v_1-v_2\|_{X_T} \\
\le & C_{2}(\rho T^{-d+1/2} +\eta) \|v_1-v_2\|_{X_T}.
\label{eq:cont}
\end{split}
\end{equation}
We note that for $\rho >0$, there exists
a sufficiently large $T>0$ and a sufficiently small $\eta >0$ 
such that
\begin{gather*}
C_{1}(\rho^2 T^{-d+1/2}
+\rho \eta +\eta)
\le \rho, \\
C_{2}(\rho T^{-d+1/2} +\eta)
\le \frac{1}{2},
\end{gather*}
since $d>1/2$.
From this observation, the estimates \eqref{eq:into} and \eqref{eq:cont} show
 that the operator $\Phi$ is a contraction map
on $\widetilde{X}_T(\rho)$ for sufficiently large $T>0$ 
and sufficiently small $\eta>0$.
Therefore for any $\rho >0$,
there exist $T>0$, $\eta>0$, and a unique solution to
the integral equation \eqref{eq:INT} in $\widetilde{X}_T(\rho)$.
The uniqueness of solutions to the equation
\eqref{eq:INT} in $X_T$ follows from
the first inequality of the estimate \eqref{eq:cont} for
solutions $v_1 \in X_T$ and $v_2 \in X_T$.
Hence 
the equation \eqref{eq:INT} has a unique solution in $X_T$.
This completes the proof of Proposition \ref{prop:FVP}.
\end{proof}


\section{Construction of suitable asymptotic profile} \label{sec:prf}

In this section, we complete the proof of Theorem~\ref{thm:main} 
by showing that the asymptotic profile $A(t,x)$ 
introduced in Section 2 
satisfies the assumptions in Proposition~\ref{prop:FVP}. 

\begin{proposition} \label{prop:ua} 
Assume that the final state  
$(\phi_{0},\phi_{1})$ satisfies $(\phi_{0},\phi_{1})\in Y$. 
Let $u_{\mathrm{ap}}$ be defined by (\ref{up}), where 
$P_{1}, Q_{1}$ and $\Psi$ are given by (\ref{p1}), (\ref{q1}) and (\ref{phase}), 
respectively. Let $v_{\mathrm{ap}}$ be defined by (\ref{vp}), where $P_{n}$ and 
$Q_{n}$ are given by (\ref{pn}) and (\ref{qn}). 
Then for $A=u_{\mathrm{ap}}+v_{\mathrm{ap}}$, there exist positive constants $C$ 
such that the inequalities 
\begin{eqnarray}
\|A(t)\|_{L^{\infty}_x} &\le& Ct^{-1}
\|(\phi_{0},\phi_{1})\|_{Y}(1+\|(\phi_{0},\phi_{1})\|_{Y}),
\label{eq:ua1}\\
\lefteqn{\|(\Box+1)A(t) -N(A(t))\|_{L^2_x}\qquad\qquad}\qquad\qquad\label{eq:ua2}\\
&\le&Ct^{-2}(\log t)^{2}\|(\phi_{0},\phi_{1})\|_{Y}(1+\|(\phi_{0},\phi_{1})\|_{Y}^{3})
\nonumber
\end{eqnarray}
hold for any $(\phi_{0},\phi_{1})\in Y$ and $t\ge e$.
\end{proposition}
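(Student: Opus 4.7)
The plan is to reduce the two bounds (\ref{eq:ua1}) and (\ref{eq:ua2}) to the cancellation identities sketched in Section~2 (Lemmas~\ref{lem:pf1} and \ref{lem:vn}) together with direct pointwise size estimates on $u_{\mathrm{ap}}$ and $v_{\mathrm{ap}}$. The $L^\infty$ bound (\ref{eq:ua1}) is the easier one: from (\ref{up}), $|u_{\mathrm{ap}}|\le t^{-1}(|P_{1}(\mu)|+|Q_{1}(\mu)|)$, and both $P_{1}$ and $Q_{1}$ (defined in (\ref{p1})--(\ref{q1})) are controlled in $L^\infty_\mu$ by $\|(\phi_{0},\phi_{1})\|_{Y}$ via Sobolev embedding applied to $\hat\phi_{0},\hat\phi_{1}$. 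From (\ref{pn})--(\ref{qn}) one reads off $|P_{n}|+|Q_{n}|\le Cn^{-5}\lambda(P_{1}(\mu)^{2}+Q_{1}(\mu)^{2})$, so the series in (\ref{vp}) is absolutely convergent and $|v_{\mathrm{ap}}|\le Ct^{-2}\|(\phi_{0},\phi_{1})\|_{Y}^{2}$. Adding the two yields (\ref{eq:ua1}).

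For the error estimate (\ref{eq:ua2}) I would use the algebraic decomposition
\begin{align*}
(\Box+1)A-N(A) &= \bigl[(\Box+1)u_{\mathrm{ap}}-N_{\mathrm{r}}(u_{\mathrm{ap}})\bigr] + \bigl[(\Box+1)v_{\mathrm{ap}}-N_{\mathrm{nr}}(u_{\mathrm{ap}})\bigr] \\
&\quad - \bigl[N(u_{\mathrm{ap}}+v_{\mathrm{ap}})-N(u_{\mathrm{ap}})\bigr].
\end{align*}
The first bracket is $O(t^{-2}(\log t)^{2})$ in $L^{2}_{x}$ by Lemma~\ref{lem:pf1}: the logarithmic phase correction $\Psi(\mu)\log t$ in (\ref{up}) is exactly what is needed to make the resonant part $N_{\mathrm{r}}(u_{\mathrm{ap}})$ cancel against $(\Box+1)u_{\mathrm{ap}}$. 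The second bracket is handled by Lemma~\ref{lem:vn} combined with the choice (\ref{pn})--(\ref{qn}): the $n$th term of the leading part of $(\Box+1)v_{\mathrm{ap}}$ carries a factor $(1-n^{2})P_{n}(\mu)$ (resp.\ $(1-n^{2})Q_{n}(\mu)$), which by construction matches the $n$th Fourier coefficient of $N_{\mathrm{nr}}(u_{\mathrm{ap}})$; the mismatches, weighted by the $n^{-5}$ decay of $(P_n,Q_n)$, are absolutely summable and produce the $(\log t)^2$ error. For the third bracket I would use the pointwise inequality $\bigl||u+v|(u+v)-|u|u\bigr|\le C(|u|+|v|)|v|$ together with the previously obtained $\|u_{\mathrm{ap}}\|_{L^\infty_{x}}\le Ct^{-1}$ and $\|v_{\mathrm{ap}}\|_{L^{2}_{x}}\le Ct^{-1}$ (this latter bound obtained by integrating $|v_{\mathrm{ap}}|^{2}=O(t^{-4})$ over the ball $\{|x|<t\}$), giving $O(t^{-2})$ in $L^{2}_{x}$. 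Tracking the $Y$-norm dependence through each bracket accounts for the factor $1+\|(\phi_{0},\phi_{1})\|_{Y}^{3}$.

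The main obstacle is clearly the verification of the two cancellation lemmas themselves. Applying $\Box+1$ to either $u_{\mathrm{ap}}$ or a single mode of $v_{\mathrm{ap}}$ generates a fairly long list of contributions: differentiating $\cos(n\langle\mu\rangle^{-1}t+n\Psi(\mu)\log t)$ twice in $t$ and twice in $x$ (through $\mu=x/\sqrt{t^{2}-|x|^{2}}$) gives the leading $(1-n^{2})$ term plus many subleading pieces carrying weights $t^{-1}$, $t^{-1}\log t$ or $t^{-2}(\log t)^{2}$, and one also has to handle derivatives falling on the cutoff $\mathbf{1}_{\{|x|<t\}}$, which must either be shown to vanish on $|x|=t$ or to be controllable in $L^{2}_{x}$. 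These calculations must be carried out term by term in the Fourier series, and convergence of the resulting series in $L^{2}_{x}$ must be verified using the $n^{-5}$ decay of $(P_{n},Q_{n})$, which comfortably dominates the $n^{2}$ growth from the second derivatives. Once Lemmas~\ref{lem:pf1} and \ref{lem:vn} are in place, gluing the three brackets together and keeping track of the $Y$-norm powers is routine, and the proposition follows by combining the resulting bounds.
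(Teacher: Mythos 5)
Your proposal is correct and follows essentially the same route as the paper: (\ref{eq:ua1}) from the pointwise bounds on $P_1,Q_1$ and the $n^{-5}$ summability of $(P_n,Q_n)$, and (\ref{eq:ua2}) from the identical three-term decomposition $(\Box+1)A-N(A)=[(\Box+1)u_{\mathrm{ap}}-N_{\mathrm{r}}(u_{\mathrm{ap}})]+[(\Box+1)v_{\mathrm{ap}}-N_{\mathrm{nr}}(u_{\mathrm{ap}})]-[N(u_{\mathrm{ap}}+v_{\mathrm{ap}})-N(u_{\mathrm{ap}})]$, with the first two brackets controlled by Lemmas~\ref{lem:pf1} and \ref{lem:vn} and the third by the elementary Lipschitz-type bound $\|N(u_{\mathrm{ap}}+v_{\mathrm{ap}})-N(u_{\mathrm{ap}})\|_{L^2_x}\lesssim(\|u_{\mathrm{ap}}\|_{L^\infty_x}+\|v_{\mathrm{ap}}\|_{L^\infty_x})\|v_{\mathrm{ap}}\|_{L^2_x}$. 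The details you flag (Jacobian factors in the change of variables $x\mapsto\mu$, the behavior near $|x|=t$, and the balance of $n^{2}$ derivative growth against $n^{-5}$ coefficient decay) are exactly the points the paper addresses inside the two lemmas, so the structure of your argument matches the paper's proof of this proposition.
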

To prove Proposition \ref{prop:ua}, 
we first calculate $(\Box+1)u_{\mathrm{ap}}$.

\begin{lemma}\label{lem:pf1} 
Assume that $(\phi_{0},\phi_{1})\in Y$.
Let $u_{\mathrm{ap}}$ be defined by (\ref{up}), where $P_{1}, Q_{1}$ and $\Psi$ are 
given by (\ref{p1}), (\ref{q1}) and (\ref{phase}), respectively. 
Let $N_{\mathrm{r}}(u_{\mathrm{ap}})$ be given by (\ref{y01}). Then it holds that
\begin{eqnarray}
\lefteqn{\|(\Box+1)u_{\mathrm{ap}}-N_{\mathrm{r}}(u_{\mathrm{ap}})\|_{L_{x}^{2}}
}\label{N2}\\
&\le&Ct^{-2}(\log t)^{2}\|(\phi_{0},\phi_{1})\|_{Y}
(1+\|(\phi_{0},\phi_{1})\|_{Y}^{2}).
\nonumber
\end{eqnarray}
\end{lemma}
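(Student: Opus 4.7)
The plan is to compute $(\Box+1)u_{\mathrm{ap}}$ directly, identify the leading resonant term that matches $N_{\mathrm{r}}(u_{\mathrm{ap}})$, and bound the remainder in $L^{2}_{x}$. I would work inside the light cone $\{|x|<t\}$ in hyperbolic coordinates $\tau=\sqrt{t^{2}-|x|^{2}}$, $\mu=x/\tau$, where $\langle\mu\rangle=t/\tau$, $\pt_t\tau=\langle\mu\rangle$, $\nabla_x\tau=-\mu$, and the phase simplifies to $\alpha=\tau+\Psi(\mu)\log t$.

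The cancellations are driven by three structural identities: the eikonal $(\pt_t\tau)^{2}-|\nabla_x\tau|^{2}=1$; the Minkowski null-orthogonality $\pt_t\tau\,\pt_tf(\mu)-\nabla_x\tau\cdot\nabla_xf(\mu)=0$ for any $C^{1}$ function $f$ of $\mu$ alone (which follows from $\pt_t\mu_j=-\mu_j\langle\mu\rangle/\tau$, $\pt_{x_i}\mu_j=(\delta_{ij}+\mu_i\mu_j)/\tau$); and $\Box\tau=2/\tau$. Applying $(\Box+1)$ to $u_{\mathrm{ap}}=t^{-1}[P_{1}(\mu)\cos\alpha+Q_{1}(\mu)\sin\alpha]$ via the chain rule
\[
(\Box+1)(R\cos\alpha)=\bigl\{\Box R-R[(\pt_t\alpha)^{2}-|\nabla_x\alpha|^{2}-1]\bigr\}\cos\alpha-\bigl\{R\,\Box\alpha+2(\pt_tR\,\pt_t\alpha-\nabla_xR\cdot\nabla_x\alpha)\bigr\}\sin\alpha
\]
(with the analogous rule for $R\sin\alpha$), two cancellations occur at order $t^{-2}$. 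First, a direct computation using $\Box\tau=2/\tau$ together with the identity $\sum_i\mu_i\pt_{x_i}P_1=(\langle\mu\rangle^{2}/\tau)\,\mu\cdot\nabla_\mu P_1$ shows that the leading part of the transport piece $R\,\Box\alpha+2(\pt_tR\,\pt_t\alpha-\nabla_xR\cdot\nabla_x\alpha)$ vanishes for $R=P_{j}(\mu)/t$. Second, by null-orthogonality and eikonal,
\[
(\pt_t\alpha)^{2}-|\nabla_x\alpha|^{2}-1=\frac{2\langle\mu\rangle\,\Psi(\mu)}{t}+O\!\left(\frac{(\log t)^{2}|\nabla_\mu\Psi|^{2}\langle\mu\rangle^{4}}{t^{2}}+\frac{\Psi^{2}}{t^{2}}\right).
\]
Multiplying the surviving $2\langle\mu\rangle\Psi/t$ by $-t^{-1}(P_{1}\cos\alpha+Q_{1}\sin\alpha)$ yields the leading resonant term $-\tfrac{2\langle\mu\rangle\Psi}{t^{2}}(P_{1}\cos\alpha+Q_{1}\sin\alpha)$, and using the algebraic identity $\sqrt{P_{1}^{2}+Q_{1}^{2}}=\langle\mu\rangle^{2}|\hat{\phi}_{0}+i\langle\mu\rangle^{-1}\hat{\phi}_{1}|$ together with (\ref{phase}), this matches $N_{\mathrm{r}}(u_{\mathrm{ap}})$ exactly.

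It then remains to bound the error $\mathcal{E}=(\Box+1)u_{\mathrm{ap}}-N_{\mathrm{r}}(u_{\mathrm{ap}})$ in $L^{2}_{x}$. The error collects (i) the quadratic phase remainder $(\log t)^{2}[(\pt_t\Psi)^{2}-|\nabla_x\Psi|^{2}]u_{\mathrm{ap}}$; (ii) the subleading transport contributions from $\Box[\Psi(\mu)\log t]$ and the next-order expansion of $\pt_tR\,\pt_t\alpha-\nabla_xR\cdot\nabla_x\alpha$; (iii) the Laplacian-type pieces $\Box(P_{1}(\mu)/t)\cos\alpha$ and its $Q_{1}$-counterpart. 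After the change of variable $x\mapsto\mu$ at fixed $t$, for which $dx=(t^{2}/\langle\mu\rangle^{4})\,d\mu$, each pointwise weight $\langle\mu\rangle^{k}/t^{j}$ converts to a weighted $L^{2}_{\mu}$-integrand; the resulting weighted $L^{2}_{\mu}$ norms of $P_{1},Q_{1},\Psi$ and up to two $\mu$-derivatives thereof are controlled by $\|(\phi_{0},\phi_{1})\|_{Y}$, whose spaces $H^{2}$, $\langle x\rangle H^{3}$, $\langle x\rangle^{2}H^{4}$ are calibrated precisely for this purpose. The cubic $\|(\phi_{0},\phi_{1})\|_{Y}^{3}$ comes from (i), whose coefficient is quadratic in $\nabla_{\mu}\Psi$ (itself linear in $\hat\phi_j$) and which multiplies the linear-in-data amplitude of $u_{\mathrm{ap}}$.

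The main obstacle is the careful bookkeeping of the expansion: one must track every contribution to $(\Box+1)u_{\mathrm{ap}}$ down to order $t^{-2}(\log t)^{2}$, verify both the leading-order transport cancellation and the resonant-phase matching once $\Psi$ is fixed by (\ref{phase}), and finally pair the $\langle\mu\rangle$-weights coming from $1/\tau$-factors against the Jacobian $t^{2}/\langle\mu\rangle^{4}$ uniformly over the light cone, where $\tau\to 0$ and $\langle\mu\rangle\to\infty$ simultaneously as $x$ approaches the lateral boundary $|x|=t$.
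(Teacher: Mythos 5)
Your proof is correct and sound, but it is organized quite differently from the paper's argument, so a comparison is worth spelling out. The paper first expands $\cos(\langle\mu\rangle^{-1}t+\Psi\log t)$ and $\sin(\langle\mu\rangle^{-1}t+\Psi\log t)$ into products of single-phase trigonometric factors, producing four pieces $I_1,\dots,I_4$; it then applies the Leibniz rule for $\Box$ to each product $t^{-1}\cos(\langle\mu\rangle^{-1}t)\cdot[P_1(\mu)\cos(\Psi\log t)]$ etc., using the explicit formula for $(\Box+1)\{t^{-m}\cos(n\langle\mu\rangle^{-1}t)\}$, and discovers the transport and null-orthogonality cancellations by brute force (the identities $J_{32}+J_{41}+J_{51}=0$ and $J_{33}+J_{42}+J_{52}=0$). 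You instead keep the full phase $\alpha=\tau+\Psi(\mu)\log t$ intact and apply the chain rule $(\Box+1)(R\cos\alpha)=\{\Box R-R[(\partial_t\alpha)^2-|\nabla_x\alpha|^2-1]\}\cos\alpha-\{R\Box\alpha+2(\partial_tR\,\partial_t\alpha-\nabla_xR\cdot\nabla_x\alpha)\}\sin\alpha$, making the cancellations visible from the outset through three structural identities: the eikonal $(\partial_t\tau)^2-|\nabla_x\tau|^2=1$, the null-orthogonality $\partial_t\tau\,\partial_tf(\mu)-\nabla_x\tau\cdot\nabla_xf(\mu)=0$, and $\Box\tau=2/\tau$. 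The paper's $J_{32}+J_{41}+J_{51}=0$ is precisely your null-orthogonality; the combination of $R\Box\alpha$ with the transport piece of $\partial_tR\,\partial_t\alpha-\nabla_xR\cdot\nabla_x\alpha$ reproduces what the paper gets from \eqref{ab1} plus the $J_{31}$ bookkeeping. Both routes isolate $-t^{-1}(P_1\cos\alpha+Q_1\sin\alpha)\cdot 2\langle\mu\rangle\Psi/t$ as the resonant leading term and leave an $O(t^{-3}(\log t)^2)$ pointwise remainder which, after converting $L^2_x$ to weighted $L^2_\mu$ via $dx=(t^2/\langle\mu\rangle^4)\,d\mu$, yields the claimed $t^{-2}(\log t)^2\|(\phi_0,\phi_1)\|_Y(1+\|(\phi_0,\phi_1)\|_Y^2)$ bound. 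Your version is a bit more conceptual and lighter on bookkeeping, while the paper's version avoids invoking the geometric identities explicitly; the final degree count (linear times $1+$quadratic in $\|(\phi_0,\phi_1)\|_Y$, coming from your term (i) $(\log t)^2[(\partial_t\Psi)^2-|\nabla_x\Psi|^2]u_{\mathrm{ap}}$) matches the paper's \eqref{Z6} exactly. One small caveat, shared with the paper: you should confirm that the two $\mu$-derivatives of $\Psi$ and of $P_1,Q_1$ you need are controlled by the $Y$-norm even though $\Psi$ involves a modulus $|\hat\phi_0+i\langle\mu\rangle^{-1}\hat\phi_1|$, which is not smooth at its zeros; the estimates \eqref{psi0}--\eqref{psi2} are what the paper relies on for this.
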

\begin{proof}[Proof of Lemma \ref{lem:pf1}]
A simple calculation 
shows
\begin{eqnarray}
\lefteqn{\qquad(\Box+1)\{t^{-m}\cos\left(n\langle\mu\rangle^{-1}t\right)\}}
\label{ab1}\\
&=&(1-n^{2})t^{-m}\cos\left(n\langle\mu\rangle^{-1}t\right)
+2n(m-1)t^{-m-1}\langle\mu\rangle\sin\left(n\langle\mu\rangle^{-1}t\right)
\nonumber\\
& &+m(m+1)t^{-m-2}\cos\left(n\langle\mu\rangle^{-1}t\right)
\nonumber
\end{eqnarray}
for $m,n\in{{\mathbb Z}}_{+}$. In a similar way, 
\begin{eqnarray*}
\lefteqn{(\Box+1)\{t^{-m}\sin\left(n\langle\mu\rangle^{-1}t\right)\}}\\
&=&(1-n^{2})t^{-m}\sin\left(n\langle\mu\rangle^{-1}t\right)
-2n(m-1)t^{-m-1}\langle\mu\rangle\cos\left(n\langle\mu\rangle^{-1}t\right)
\\
& &+m(m+1)t^{-m-2}\sin\left(n\langle\mu\rangle^{-1}t\right).
\end{eqnarray*}
We now consider a function of the form $g=g(t,\mu)$.
Regarding $(s,\mu)=(t,x/\sqrt{t^2-|x|^2})$ as new variables, one obtains
\begin{eqnarray*}
\pt_{t}g
&=&\pt_{s}g-s^{-1}\langle \mu \rangle^{2}\mu_{1}\pt_{\mu_{1}}g
-s^{-1}\langle \mu \rangle^{2}\mu_{2}\pt_{\mu_{2}}g,\\
\pt_{x_{1}}g
&=&s^{-1}\langle \mu \rangle(1+\mu_{1}^{2})\pt_{\mu_{1}}g
+s^{-1}\langle \mu \rangle\mu_{1}\mu_{2}\pt_{\mu_{2}}g,\\
\pt_{x_{2}}g
&=&s^{-1}\langle \mu \rangle\mu_{1}\mu_{2}\pt_{\mu_{1}}g
+s^{-1}\langle \mu \rangle(1+\mu_{2}^{2})\pt_{\mu_{2}}g.
\end{eqnarray*}
Hence, 
\begin{eqnarray}
\Box (g(t,\mu))
&=&
\pt_{s}^{2}g
-s^{-2}\langle \mu \rangle^2(1+\mu_{1}^{2})
\pt_{\mu_{1}}^{2}g
-s^{-2}\langle \mu \rangle^2(1+\mu_{2}^{2})
\pt_{\mu_{2}}^{2}g
\label{ab2}\\
& &-2s^{-1}\langle \mu \rangle^2\mu_{1}\pt_{s}\pt_{\mu_{1}}g
-2s^{-1}\langle \mu \rangle^2\mu_{2}\pt_{s}\pt_{\mu_{2}}g
\nonumber\\
& &-2s^{-2}\langle \mu \rangle^2\mu_{1}\mu_{2}\pt_{\mu_{1}}\pt_{\mu_{2}}g
\nonumber\\
& &-2s^{-2}\langle \mu \rangle^2\mu_{1}\pt_{\mu_{1}}g
-2s^{-2}\langle \mu \rangle^2\mu_{2}\pt_{\mu_{2}}g.
\nonumber
\end{eqnarray}

By using the above identities, we now calculate 
$(\Box+1)u_{\mathrm{ap}}$. 
To this end, we split it into the following four pieces. 
\begin{eqnarray}
\lefteqn{(\Box+1)u_{\mathrm{ap}}}\label{Z1}\\
&=&(\Box+1)\left\{t^{-1}P_{1}(\mu)
\cos\left(\langle\mu\rangle^{-1}t\right)\cos\left(\Psi(\mu)\log t\right)\right\}
\nonumber\\
& &-(\Box+1)\left\{t^{-1}P_{1}(\mu)
\sin\left(\langle\mu\rangle^{-1}t\right)\sin\left(\Psi(\mu)\log t\right)\right\}
\nonumber\\
& &+(\Box+1)\left\{t^{-1}Q_{1}(\mu)
\sin\left(\langle\mu\rangle^{-1}t\right)\cos\left(\Psi(\mu)\log t\right)\right\}
\nonumber\\
& &+(\Box+1)\left\{t^{-1}Q_{1}(\mu)
\cos\left(\langle\mu\rangle^{-1}t\right)\sin\left(\Psi(\mu)\log t\right)\right\}
\nonumber\\
&=:&I_{1}+I_{2}+I_{3}+I_{4}.
\nonumber
\end{eqnarray}
Further, we split $I_{1}$ into the following five pieces:
\begin{eqnarray}
I_{1}&=&
(\Box+1)\left\{t^{-1}\cos\left(\langle\mu\rangle^{-1}t\right)\right\}
P_{1}(\mu)\cos\left(\Psi(\mu)\log t\right)\label{Z7}\\
& &+t^{-1}\cos\left(\langle\mu\rangle^{-1}t\right)
\Box\left\{P_{1}(\mu)\cos\left(\Psi(\mu)\log t\right)\right\}
\nonumber\\
& &+2\pt_{t}\left\{t^{-1}\cos\left(\langle\mu\rangle^{-1}t\right)\right\}
\pt_{t}\left\{P_{1}(\mu)\cos\left(\Psi(\mu)\log t\right)\right\}
\nonumber\\
& &-2\pt_{x_{1}}\left\{t^{-1}\cos\left(\langle\mu\rangle^{-1}t\right)\right\}
\pt_{x_{1}}\left\{P_{1}(\mu)\cos\left(\Psi(\mu)\log t\right)\right\}
\nonumber\\
& &-2\pt_{x_{2}}\left\{t^{-1}\cos\left(\langle\mu\rangle^{-1}t\right)\right\}
\pt_{x_{2}}\left\{P_{1}(\mu)\cos\left(\Psi(\mu)\log t\right)\right\}
\nonumber\\
&=:&J_{1}+J_{2}+J_{3}+J_{4}+J_{5}.
\nonumber
\end{eqnarray}
By (\ref{ab1}), we see 
\begin{eqnarray}
|J_{1}(\mu)|\le Ct^{-3}|P_{1}(\mu)|.\label{ab3}
\end{eqnarray}
By (\ref{ab2}), we have
\begin{eqnarray}
|J_{2}(\mu)|
&\le&Ct^{-3}(\log t)^{2}\{(|P_{1}(\mu)||\Psi(\mu)|^{2}+|P_{1}(\mu)||\Psi(\mu)|)
\label{ab4}\\
& &\qquad+\langle\mu\rangle^{3}
(|P_{1}(\mu)||\Psi(\mu)||D\Psi(\mu)|+|DP_{1}(\mu)||\Psi(\mu)|
\nonumber\\
& &\qquad\qquad\qquad+|P_{1}(\mu)||D\Psi(\mu)|+
|DP_{1}(\mu)|)
\nonumber\\
& &\qquad+\langle\mu\rangle^{4}
(|P_{1}(\mu)||D\Psi(\mu)|^{2}+|DP_{1}(\mu)||D\Psi(\mu)|
\nonumber\\
& &\qquad\qquad\qquad+|P_{1}(\mu)||D^{2}\Psi(\mu)|+
|D^{2}P_{1}(\mu)|)\},\nonumber
\end{eqnarray}
where $|Df(\mu)|=|\pt_{\mu_{1}}f(\mu)|+|\pt_{\mu_{2}}f(\mu)|$ and 
$|D^{2}f(\mu)|=|\pt_{\mu_{1}}^{2}f(\mu)|+|\pt_{\mu_{1}}\pt_{\mu_{2}}f(\mu)|
+|\pt_{\mu_{2}}^{2}f(\mu)|$. 
For $J_{j}$, $j=3,4,5$, an elementary calculation yields 
\begin{eqnarray*}
J_{3}
&=&-2t^{-1}\langle\mu\rangle
\sin\left(\langle\mu\rangle^{-1}t\right)\\
& &\qquad\times
\{\pt_{t}-t^{-1}\langle \mu \rangle^{2}\mu_{1}\pt_{\mu_{1}}
-t^{-1}\langle \mu \rangle^{2}\mu_{2}\pt_{\mu_{2}}\}
P_{1}(\mu)\cos\left(\Psi(\mu)\log t\right)\\
& &-2t^{-2}\cos\left(\langle\mu\rangle^{-1}t\right)\\
& &\qquad\times
\{\pt_{t}-t^{-1}\langle \mu \rangle^{2}\mu_{1}\pt_{\mu_{1}}
-t^{-1}\langle \mu \rangle^{2}\mu_{2}\pt_{\mu_{2}}\}
P_{1}(\mu)\cos\left(\Psi(\mu)\log t\right)\\
&=:&J_{31}+J_{32}+J_{33}+J_{34}+J_{35}+J_{36},\\
J_{4}
&=&-2t^{-1}\mu_{1}
\sin\left(\langle\mu\rangle^{-1}t\right)\\
& &\qquad\times
\{t^{-1}\langle \mu \rangle(1+\mu_{1}^{2})\pt_{\mu_{1}}
+t^{-1}\langle \mu \rangle\mu_{1}\mu_{2}\pt_{\mu_{2}}\}
P_{1}(\mu)\cos\left(\Psi(\mu)\log t\right)\\
&=:&J_{41}+J_{42},\\
J_{5}
&=&-2t^{-1}\mu_{2}
\sin\left(\langle\mu\rangle^{-1}t\right)\\
& &\qquad\times
\{t^{-1}\langle \mu \rangle\mu_{1}\mu_{2}\pt_{\mu_{1}}
+t^{-1}\langle \mu \rangle(1+\mu_{2}^{2})\pt_{\mu_{2}}\}
P_{1}(\mu)\cos\left(\Psi(\mu)\log t\right)\\
&=:&J_{51}+J_{52}.
\end{eqnarray*}
Since $J_{32}+J_{41}+J_{51}=0$ and $J_{33}+J_{42}+J_{52}=0$, we see that 
\begin{eqnarray}
\lefteqn{J_{3}+J_{4}+J_{5}}\label{ab5}\\
&=&J_{31}+J_{34}+J_{35}+J_{36}\nonumber\\
&=&2t^{-2}\langle\mu\rangle P_{1}(\mu)\Psi(\mu)
\sin\left(\langle\mu\rangle^{-1}t\right)\sin\left(\Psi(\mu)\log t\right)
\nonumber\\
& &+J_{34}+J_{35}+J_{36}. \nonumber
\nonumber
\end{eqnarray}
Substituting (\ref{ab5}) into (\ref{Z7}), we obtain
\begin{eqnarray}
\qquad\ \ I_{1}=2t^{-2}\langle\mu\rangle P_{1}(\mu)\Psi(\mu)
\sin\left(\langle\mu\rangle^{-1}t\right)\sin\left(\Psi(\mu)\log t\right)
+R_{11},\label{Z2}
\end{eqnarray}
where $R_{11}=J_{1}+J_{2}+J_{34}+J_{35}+J_{36}$. Hence 
by (\ref{ab3}) and (\ref{ab4}),  
\begin{eqnarray}
\lefteqn{|R_{11}(t,\mu)|}\label{r11}\\
&\le&
Ct^{-3}(\log t)^{2}\{(|P_{1}(\mu)||\Psi(\mu)|^{2}+|P_{1}(\mu)||\Psi(\mu)|
+|P_{1}(\mu)|)
\nonumber\\
& &\qquad+\langle\mu\rangle^{3}
(|P_{1}(\mu)||\Psi(\mu)||D\Psi(\mu)|+|DP_{1}(\mu)||\Psi(\mu)|
\nonumber\\
& &\qquad\qquad\qquad+|P_{1}(\mu)||D\Psi(\mu)|+
|DP_{1}(\mu)|)
\nonumber\\
& &\qquad+\langle\mu\rangle^{4}
(|P_{1}(\mu)||D\Psi(\mu)|^{2}+|DP_{1}(\mu)||D\Psi(\mu)|
\nonumber\\
& &\qquad\qquad\qquad+|P_{1}(\mu)||D^{2}\Psi(\mu)|+
|D^{2}P_{1}(\mu)|)\}.\nonumber
\end{eqnarray}
In a similar way, we have
\begin{eqnarray}
\qquad\ \  I_{2}&=&2t^{-2}\langle\mu\rangle P_{1}(\mu)\Psi(\mu)
\cos\left(\langle\mu\rangle^{-1}t\right)\cos\left(\Psi(\mu)\log t\right)
+R_{12},\label{Z3}\\
I_{3}&=&-2t^{-2}\langle\mu\rangle Q_{1}(\mu)\Psi(\mu)
\cos\left(\langle\mu\rangle^{-1}t\right)\sin\left(\Psi(\mu)\log t\right)
+R_{13},\label{Z4}\\
I_{4}&=&-2t^{-2}\langle\mu\rangle Q_{1}(\mu)\Psi(\mu)
\sin\left(\langle\mu\rangle^{-1}t\right)\cos\left(\Psi(\mu)\log t\right)
+R_{14},\label{Z5}
\end{eqnarray}
where $R_{12}$ satisfies (\ref{r11}), and 
$R_{13}$ and $R_{14}$ satisfy (\ref{r11}) with $Q_{1}$ 
instead of $P_{1}$. Substituting (\ref{Z2})-(\ref{Z5}) 
into (\ref{Z1}), 
we obtain 
\begin{eqnarray}
\label{Z6}\\
\lefteqn{\ |(\Box+1)u_{\mathrm{ap}}-N_{\mathrm{r}}(u_{\mathrm{ap}})|}
\nonumber\\
&\le&|R_{11}|+|R_{12}|+|R_{13}|+|R_{14}|
\nonumber\\
&\le&Ct^{-3}(\log t)^{2}
\sum_{Z=P,Q}\bigl\{(|Z_{1}(\mu)||\Psi(\mu)|^{2}+|Z_{1}(\mu)||\Psi(\mu)|+|Z_{1}(\mu)|)
\nonumber\\
& &\qquad\qquad\qquad\qquad+\langle\mu\rangle^{3}
(|Z_{1}(\mu)||\Psi(\mu)||D\Psi(\mu)|+|DZ_{1}(\mu)||\Psi(\mu)|
\nonumber\\
& &\qquad\qquad\qquad\qquad\qquad\quad+|Z_{1}(\mu)||D\Psi(\mu)|+
|DZ_{1}(\mu)|)
\nonumber\\
& &\qquad\qquad\qquad\qquad+\langle\mu\rangle^{4}
(|Z_{1}(\mu)||D\Psi(\mu)|^{2}+|DZ_{1}(\mu)||D\Psi(\mu)|
\nonumber\\
& &\qquad\qquad\qquad\qquad\qquad\quad+|Z_{1}(\mu)||D^{2}\Psi(\mu)|+
|D^{2}Z_{1}(\mu)|)\bigl\}.
\nonumber
\end{eqnarray}
By simple calculations, we see that  
\begin{eqnarray}
|Z_{1}(\mu)|
&\le&C\left(\langle\mu\rangle^{2}|\hat{\phi}_{0}(\mu)|
+\langle\mu\rangle|\hat{\phi}_{1}(\mu)|\right),
\label{p10}\\
|DZ_{1}(\mu)|
&\le&C\left(\langle\mu\rangle|\hat{\phi}_{0}(\mu)|
+\langle\mu\rangle^{2}|D\hat{\phi}_{0}(\mu)|\right)
\label{p11}\\
& &+C\left(|\hat{\phi}_{1}(\mu)|+\langle\mu\rangle|D\hat{\phi}_{1}(\mu)|\right),
\nonumber\\
|D^{2}Z_{1}(\mu)|
&\le&C\left(|\hat{\phi}_{0}(\mu)|+\langle\mu\rangle|D\hat{\phi}_{0}(\mu)|
+\langle\mu\rangle^{2}|D^{2}\hat{\phi}_{0}(\mu)|\right)
\label{p12}\\
& &+C\left(\langle\mu\rangle^{-1}|\hat{\phi}_{1}(\mu)|
+|D\hat{\phi}_{1}(\mu)|+\langle\mu\rangle|D^{2}\hat{\phi}_{1}(\mu)|\right)
\nonumber
\end{eqnarray}
for $Z=P,Q$, and
\begin{eqnarray}
\qquad|\Psi(\mu)|
&\le&C\left(\langle\mu\rangle|\hat{\phi}_{0}(\mu)|
+|\hat{\phi}_{1}(\mu)|\right),
\label{psi0}\\
|D\Psi(\mu)|
&\le&C\left(|\hat{\phi}_{0}(\mu)|
+\langle\mu\rangle|D\hat{\phi}_{0}(\mu)|\right)
\label{psi1}\\
& &+C\left(\langle\mu\rangle^{-1}|\hat{\phi}_{1}(\mu)|+|D\hat{\phi}_{1}(\mu)|
\right),
\nonumber\\
|D^{2}\Psi(\mu)|
&\le&C\left(\langle\mu\rangle^{-1}|\hat{\phi}_{0}(\mu)|+|D\hat{\phi}_{0}(\mu)|
+\langle\mu\rangle|D^{2}\hat{\phi}_{0}(\mu)|\right)
\label{psi2}\\
& &+C\left(\langle\mu\rangle^{-2}|\hat{\phi}_{1}(\mu)|
+\langle\mu\rangle^{-1}|D\hat{\phi}_{1}(\mu)|+|D^{2}\hat{\phi}_{1}(\mu)|
\right).
\nonumber
\end{eqnarray}
Plugging (\ref{p10})-(\ref{p12}) and (\ref{psi0})-(\ref{psi2}) 
into (\ref{Z6}), we have 
\begin{eqnarray*}
\lefteqn{|(\Box+1)u_{\mathrm{ap}}-N_{\mathrm{r}}(u_{\mathrm{ap}})|}\\
&\le&Ct^{-3}(\log t)^{2}\langle\mu\rangle^{2}\\
& &\times\biggl\{
\left(1+\langle\mu\rangle|\hat{\phi}_{0}(\mu)|
+\langle\mu\rangle^{2}|D\hat{\phi}_{0}(\mu)|\right)^{2}
\left(\langle\mu\rangle^{2}|\hat{\phi}_{0}(\mu)|
+\langle\mu\rangle^{3}|D\hat{\phi}_{0}(\mu)|\right)\\
& &\quad+\left(1+\langle\mu\rangle|\hat{\phi}_{0}(\mu)|
\right)\langle\mu\rangle^{4}|D^{2}\hat{\phi}_{0}(\mu)|\\
& &\quad+\left(1+|\hat{\phi}_{1}(\mu)|
+\langle\mu\rangle|D\hat{\phi}_{1}(\mu)|\right)^{2}
\left(\langle\mu\rangle|\hat{\phi}_{1}(\mu)|
+\langle\mu\rangle^{2}|D\hat{\phi}_{1}(\mu)|\right)\\
& &\quad+\left(1+|\hat{\phi}_{1}(\mu)|
\right)\langle\mu\rangle^{3}|D^{2}\hat{\phi}_{1}(\mu)|\biggl\}.
\end{eqnarray*}
Therefore, taking $L^{2}$ norm for the above inequality with respect to $x$ variable, 
we have the estimate \eqref{N2}.
\end{proof}

Next we calculate $(\Box +1)v_{\mathrm{ap}}$. 

\begin{lemma}\label{lem:vn} 
Assume that $(\phi_{0},\phi_{1})\in Y$. 
Let $u_{\mathrm{ap}}$ be defined by (\ref{up}), where 
$P_{1}, Q_{1}$ and $\Psi$ are given by (\ref{p1}), (\ref{q1}) and (\ref{phase}), 
respectively. Let $v_{\mathrm{ap}}$ be defined by (\ref{vp}), where $P_{n}$ and 
$Q_{n}$ are given by (\ref{pn}) and (\ref{qn}). 
Furthermore, let $N_{\mathrm{nr}}(u_{\mathrm{ap}})$ be given by (\ref{y01}). 
Then, we have 
\begin{eqnarray}
\lefteqn{\|(\Box+1)v_{\mathrm{ap}}-N_{\mathrm{nr}}(u_{\mathrm{ap}})
\|_{L_{x}^{2}}}\label{N3}\\
&\le&Ct^{-2}(\log t)^{2}
\|(\phi_{0},\phi_{1})\|_{Y}^{2}(1+\|(\phi_{0},\phi_{1})\|_{Y}^{2}).
\nonumber
\end{eqnarray}
\end{lemma}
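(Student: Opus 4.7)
The plan is to mimic, termwise in $n$, the computation carried out for $u_{\mathrm{ap}}$ in Lemma \ref{lem:pf1}. For each fixed $n\ge 2$, I would split the $n$-th summand of $v_{\mathrm{ap}}$ into four products via the sum-of-angle formulas for $\cos(n\alpha)$ and $\sin(n\alpha)$ (with $\alpha=\langle\mu\rangle^{-1}t+\Psi(\mu)\log t$) exactly as in (\ref{Z1}), apply the identity $(\Box+1)(fg)=g(\Box+1)f+f\Box g+2\pt_t f\pt_t g-2\nabla_x f\cdot\nabla_x g$, and use (\ref{ab1}) (with $m=2$) together with (\ref{ab2}) to expand each factor. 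The leading $O(t^{-2})$ part of $g(\Box+1)f$ produces $(1-n^2)t^{-2}P_n(\mu)\cos(n\alpha)+(1-n^2)t^{-2}Q_n(\mu)\sin(n\alpha)$, which after summation over $n\ge 2$ matches $N_{\mathrm{nr}}(u_{\mathrm{ap}})$ by the defining formulas (\ref{pn})--(\ref{qn}). Thus the cancellation is built into the choice of $P_n,Q_n$, and what remains is to estimate the sum of all the remainders.

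The hard part, and the new ingredient compared with Lemma \ref{lem:pf1}, is summability in $n$. The $n$-th remainder has the same algebraic structure as the right-hand side of (\ref{r11}), with $P_n,Q_n$ in place of $P_1,Q_1$, an overall prefactor $t^{-3}(\log t)^2$, and at most two extra factors of $n$ coming from differentiating the $n$-dependent phases in $\Box g$, $\pt_t f\pt_t g$, and $\nabla_x f\cdot\nabla_x g$. Using the explicit form $P_n,Q_n=c_n\lambda(P_1^2+Q_1^2)\cdot(\cos n\beta\text{ or }\sin n\beta)$ with $c_n=O(n^{-5})$, differentiation in $\mu$ up to second order yields $|D^j P_n|+|D^j Q_n|=O(n^{-5+j})$ for $j=0,1,2$, with implicit coefficients quadratic in $(P_1,Q_1)$ and polynomial in their $\mu$-derivatives and those of $\beta$. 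Combined with the $O(n^2)$ from phase differentiation, the $n$-th remainder is pointwise bounded by $Cn^{-3}\cdot t^{-3}(\log t)^2\cdot \widetilde W(\mu)$, where $\widetilde W$ has exactly the form of the bracket in (\ref{r11}) multiplied by an additional factor $P_1^2+Q_1^2$. Since $\sum_{n\ge 2}n^{-3}<\infty$, termwise application of $(\Box+1)$ to $v_{\mathrm{ap}}$ is justified and the full remainder is absolutely summable.

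Finally, I would convert the pointwise bound to $L^2_x$ by changing variables $x=t\mu/\langle\mu\rangle$ on the support $\{|x|<t\}$, whose Jacobian $t^2\langle\mu\rangle^{-4}\,d\mu$ gives the identity $\|h(\mu(x))\|_{L^2_x}=t\|h\langle\mu\rangle^{-2}\|_{L^2_\mu}$. This turns the pointwise $t^{-3}(\log t)^2$ decay into the claimed $t^{-2}(\log t)^2$. The required bound on $\|\langle\mu\rangle^{-2}\widetilde W\|_{L^2_\mu}$ is obtained exactly as in the final step of Lemma \ref{lem:pf1} via Plancherel and the pointwise inequalities (\ref{p10})--(\ref{psi2}); the additional factor $\|(\phi_0,\phi_1)\|_Y^2$ in (\ref{N3}) compared to (\ref{N2}) is precisely the $P_1^2+Q_1^2$ carried by each $P_n,Q_n$.
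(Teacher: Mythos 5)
Your proposal follows the paper's own argument: split the $n$-th summand via the product rule using \eqref{ab1}--\eqref{ab2}, observe that the $(1-n^2)t^{-2}$ terms cancel $N_{\mathrm{nr}}(u_{\mathrm{ap}})$ by the choice \eqref{pn}--\eqref{qn}, bound the remainders pointwise by $Cn^{-3}t^{-3}(\log t)^2\widetilde{W}(\mu)$ using $|D^jZ_n|=O(n^{-5+j})$ against the $O(n^2)$ loss from phase differentiation, and sum via $\sum n^{-3}<\infty$. The only difference is that you make the change of variables $x=t\mu/\langle\mu\rangle$ with Jacobian $t^2\langle\mu\rangle^{-4}d\mu$ explicit, whereas the paper leaves the passage from the pointwise bound to the $L^2_x$ estimate implicit; this is a welcome clarification, not a deviation.
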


\begin{proof}[Proof of Lemma \ref{lem:vn}.]
In a similar way as in the proof of Lemma \ref{lem:pf1}, we have 
\begin{eqnarray}
\label{h11}\\
(\Box+1)v_{\mathrm{ap}}&=&
t^{-2}\sum_{n=2}^{\infty}(1-n^{2})P_{n}(\mu)
\cos\left(n\langle\mu\rangle^{-1}t
+n\Psi(\mu)\log t\right)\nonumber\\
& &+
t^{-2}\sum_{n=2}^{\infty}(1-n^{2})Q_{n}(\mu)
\sin\left(n\langle\mu\rangle^{-1}t+n\Psi(\mu)\log t\right),
\nonumber\\
& &+\sum_{n=2}^{\infty}R_{n}(t,\mu),
\nonumber
\end{eqnarray}
where $R_{n}$ ($n\ge2$) satisfy the inequalities 
\begin{eqnarray}
\lefteqn{\quad|R_{n}(t,\mu)|}\label{h3}\\
&\le&Ct^{-3}(\log t)^{2}\nonumber\\
& &\times
\sum_{Z=P,Q}\bigl\{\left(n^{2}|Z_{n}(\mu)||\Psi(\mu)|^{2}+n|Z_{n}(\mu)||\Psi(\mu)|
+|Z_{n}(\mu)|\right)
\nonumber\\
& &\qquad\qquad+n\langle\mu\rangle|Z_{n}(\mu)|
\nonumber\\
& &\qquad\qquad+\langle\mu\rangle^{3}
\left(n^{2}|Z_{n}(\mu)||\Psi(\mu)||D\Psi(\mu)|+n|DZ_{n}(\mu)||\Psi(\mu)|\right.
\nonumber\\
& &\qquad\qquad\qquad\ \ \ \left.+n|Z_{n}(\mu)||D\Psi(\mu)|+
|DZ_{n}(\mu)|\right)
\nonumber\\
& &\qquad\qquad+\langle\mu\rangle^{4}
\left(n^{2}|Z_{n}(\mu)||D\Psi(\mu)|^{2}+n|DZ_{n}(\mu)||D\Psi(\mu)|
\right.
\nonumber\\
& &\qquad\qquad\qquad\ \ \ \left.+n|Z_{n}(\mu)||D^{2}\Psi(\mu)|+
|D^{2}Z_{n}(\mu)|\right)\bigl\}.\nonumber
\end{eqnarray}
Then by (\ref{y01}) and (\ref{h11}), we find
\begin{eqnarray}
\|(\Box+1)v_{\mathrm{ap}}-N_{\mathrm{nr}}(u_{\mathrm{ap}})\|_{L_{x}^{2}}
\le \sum_{n=2}^{\infty}\|R_{n}(t)\|_{L_{x}^{2}}.
\label{b2}
\end{eqnarray}
Differentiating (\ref{y0}), we see that $\beta$ satisfies
\begin{eqnarray}
\qquad\quad|D\beta(\mu)|&\le&C\frac{|DP_{1}(\mu)|+|DQ_{1}(\mu)|
}{\sqrt{P_{1}(\mu)^{2}+Q_{1}(\mu)^{2}}},
\label{y1}\\
|D^{2}\beta(\mu)|&\le&C\frac{|D^{2}P_{1}(\mu)|+|D^{2}Q_{1}(\mu)|
}{\sqrt{P_{1}(\mu)^{2}+Q_{1}(\mu)^{2}}}
+C\frac{|DP_{1}(\mu)|^{2}+|DQ_{1}(\mu)|^{2}}{P_{1}(\mu)^{2}+Q_{1}(\mu)^{2}}.
\label{y2}
\end{eqnarray} 
Plugging (\ref{p10})-(\ref{p12}) and (\ref{y1})-(\ref{y2}) into 
(\ref{pn}) and (\ref{qn}), we have
\begin{eqnarray}
\qquad\qquad|Z_{n}(\mu)|
&\le&Cn^{-5}\left(\langle\mu\rangle^{4}|\hat{\phi}_{0}(\mu)|^{2}
+\langle\mu\rangle^{2}|\hat{\phi}_{1}(\mu)|^{2}\right),
\label{y3}\\
|DZ_{n}(\mu)|
&\le&Cn^{-4}|\hat{\phi}_{0}(\mu)|\left(\langle\mu\rangle^{3}|\hat{\phi}_{0}(\mu)|
+\langle\mu\rangle^{4}|D\hat{\phi}_{0}(\mu)|\right)
\label{y4}\\
& &+Cn^{-4}\langle\mu\rangle^{-1}|\hat{\phi}_{1}(\mu)|
\left(\langle\mu\rangle^{2}|\hat{\phi}_{1}(\mu)|
+\langle\mu\rangle^{3}|D\hat{\phi}_{1}(\mu)|\right),
\nonumber
\end{eqnarray}
\begin{eqnarray}
\label{y5}\\
\lefteqn{|D^{2}Z_{n}(\mu)|}\nonumber\\
&\le&Cn^{-3}\biggl\{\left(|\hat{\phi}_{0}(\mu)|+\langle\mu\rangle|D\hat{\phi}_{0}(\mu)|\right)
\left(\langle\mu\rangle^{2}|\hat{\phi}_{0}(\mu)|+\langle\mu\rangle^{3}|D\hat{\phi}_{0}(\mu)|\right)
\nonumber\\
& &\qquad\qquad+|\hat{\phi}_{0}(\mu)|\langle\mu\rangle^{4}|D^{2}\hat{\phi}_{0}(\mu)|\biggl\}
\nonumber\\
& &+Cn^{-3}\biggl\{\left(\langle\mu\rangle^{-1}|\hat{\phi}_{1}(\mu)|
+|D\hat{\phi}_{1}(\mu)|\right)
\left(\langle\mu\rangle|\hat{\phi}_{1}(\mu)|
+\langle\mu\rangle^{2}|D\hat{\phi}_{1}(\mu)|
\right)\nonumber\\
& &\qquad\qquad+\langle\mu\rangle^{-1}|\hat{\phi}_{1}(\mu)|
\langle\mu\rangle^{3}|D^{2}\hat{\phi}_{1}(\mu)|
\biggl\}
\nonumber
\end{eqnarray}
for $Z=P,Q$. 
Substituting (\ref{psi0})-(\ref{psi2}) and (\ref{y3})-(\ref{y5}) 
into (\ref{h3}), we obtain 
\begin{eqnarray*}
\lefteqn{|R_{n}(t,\mu)|}\\
&\le&Cn^{-3}t^{-3}(\log t)^{2}\langle\mu\rangle^{2}\\
& &\times\biggl\{\left(1+\langle\mu\rangle|\hat{\phi}_{0}(\mu)|
\right)^{2}
\left(\langle\mu\rangle^{2}|\hat{\phi}_{0}(\mu)|
+\langle\mu\rangle^{3}|D\hat{\phi}_{0}(\mu)|
\right)^{2}\\
& &\qquad+(1+\langle\mu\rangle|\hat{\phi}_{0}(\mu)|)
\langle\mu\rangle^{2}
|\hat{\phi}_{0}(\mu)|
\langle\mu\rangle^{4}|D^{2}\hat{\phi}_{0}(\mu)|\\
& &\qquad+\left(1+|\hat{\phi}_{1}(\mu)|
\right)^{2}
\left(\langle\mu\rangle|\hat{\phi}_{1}(\mu)|
+\langle\mu\rangle^{2}|D\hat{\phi}_{1}(\mu)|
\right)^{2}\\
& &\qquad+(1+|\hat{\phi}_{1}(\mu)|)
\langle\mu\rangle
|\hat{\phi}_{1}(\mu)|
\langle\mu\rangle^{3}|D^{2}\hat{\phi}_{1}(\mu)|
\biggl\}.
\end{eqnarray*}
Therefore taking $L^{2}$ norm for $R_{n}$ with respect to $x$ 
variable, we have
\begin{eqnarray}
\|R_{n}(t)\|_{L_{x}^{2}}
\le Cn^{-3}t^{-2}(\log t)^{2}
\|(\phi_{0},\phi_{1})\|_{Y}^{2}(1+\|(\phi_{0},\phi_{1})\|_{Y}^{2}).
\label{b3}
\end{eqnarray}
The inequalities (\ref{b2}) and (\ref{b3}) yield 
\begin{eqnarray*}
\lefteqn{\|(\Box+1)v_{\mathrm{ap}}-N_{\mathrm{nr}}(u_{\mathrm{ap}})
\|_{L_{x}^{2}}}\\
&\le&Ct^{-2}(\log t)^{2}\sum_{n=2}^{\infty}n^{-3}
\|(\phi_{0},\phi_{1})\|_{Y}^{2}(1+\|(\phi_{0},\phi_{1})\|_{Y}^{2})
\\
&\le&Ct^{-2}(\log t)^{2}
\|(\phi_{0},\phi_{1})\|_{Y}^{2}(1+\|(\phi_{0},\phi_{1})\|_{Y}^{2}).
\end{eqnarray*}
This completes the proof of Lemma \ref{lem:vn}.
\end{proof}

\begin{proof}[Proof of Proposition \ref{prop:ua}] 
The inequality \eqref{eq:ua1} follows from the definition
of $A$ immediately. To show (\ref{eq:ua2}), 
we first confirm that addition by $v_{\mathrm{ap}}$ 
does not change the main part of the nonlinear part.
However, it is obvious because $v_{\mathrm{ap}}$ 
decays faster than $u_{\mathrm{ap}}$ in time.
Indeed, it can be observed by the elementary inequality
\begin{eqnarray}
\lefteqn{\|N(u_{\mathrm{ap}}+v_{\mathrm{ap}})
-N(u_{\mathrm{ap}})\|_{L_{x}^{2}}}\label{N4}\\
&\le&C(\|u_{\mathrm{ap}}\|_{L_{x}^{\infty}}
+\|v_{\mathrm{ap}}\|_{L_{x}^{\infty}})
\|v_{\mathrm{ap}}\|_{L_{x}^2}\nonumber\\
&\le& Ct^{-2}\|(\phi_{0},\phi_{1})\|_{Y}^{3}(1+\|(\phi_{0},\phi_{1})\|_{Y}).
\nonumber
\end{eqnarray}
By Lemma \ref{lem:pf1} (\ref{N2}),  
Lemma \ref{lem:vn} (\ref{N3}) and (\ref{N4}), 
we see 
\begin{eqnarray*}
\lefteqn{\|(\Box+1)A(t) -N(A(t))\|_{L^2_x}}\\
&\le&\|(\Box+1)u_{\mathrm{ap}}
-N_{\mathrm{r}}(u_{\mathrm{ap}})\|_{L_{x}^{2}}
+\|(\Box+1)v_{\mathrm{ap}}-N_{\mathrm{nr}}(u_{\mathrm{ap}})\|_{L_{x}^{2}}
\\
& &
+\|N(u_{\mathrm{ap}}+v_{\mathrm{ap}})-N(u_{\mathrm{ap}})\|_{L_{x}^{2}}\\
&\le&Ct^{-2}(\log t)^{2}
\|(\phi_{0},\phi_{1})\|_{Y}(1+\|(\phi_{0},\phi_{1})\|_{Y}^{3}).
\end{eqnarray*} 
Hence, we have the inequality \eqref{eq:ua2}. This completes they proof of 
Proposition \ref{prop:ua}. 
\end{proof}

\begin{proof}[Proof of Theorem \ref{thm:main}]
By Proposition \ref{prop:ua}, 
we can apply Proposition \ref{prop:FVP} for 
$A=u_{\mathrm{ap}}+v_{\mathrm{ap}}$. Then there exits a
solution $u$ to (\ref{K}) satisfying (\ref{pr}). Hence 
\begin{eqnarray*} 
\|u-u_{\mathrm{ap}}\|_{L_{x}^{2}}
&\le&\|u-u_{\mathrm{ap}}-v_{\mathrm{ap}}\|_{L_{x}^{2}}
+\|v_{\mathrm{ap}}\|_{L_{x}^{2}}\\
&\le& Ct^{-d}+Ct^{-1}\\
&\le& Ct^{-d},
\end{eqnarray*} 
where $1/2<d<1$. This completes the proof of Theorem \ref{thm:main}.
\end{proof}

\vskip3mm
\noindent {\bf Acknowledgments.} 
The authors would like to thank Professor Hideaki Sunagawa for 
drawing their attention to his related works. 
S.M. is partially supported by the Sumitomo Foundation, Basic Science Research
Projects No.\ 161145. 
J.S. is partially supported by JSPS,
Grant-in-Aid for Young Scientists (A) 25707004.

\end{document}